\crefname{hypothesis}{Hypothesis}{Hypotheses}
\Crefname{ALC@unique}{Line}{Lines}
\colorlet{texcscolor}{blue!50!black}
\colorlet{texemcolor}{red!70!black}
\colorlet{texpreamble}{red!70!black}
\colorlet{codebackground}{black!25!white!25}
\newcommand{\argmin}{\text{arg}\min}
\newcommand{\argmax}{\text{arg}\max}
\lstdefinestyle{siamlatex}{%
  style=tcblatex,
  texcsstyle=*\color{texcscolor},
  texcsstyle=[2]\color{texemcolor},
  keywordstyle=[2]\color{texemcolor},
  moretexcs={cref,Cref,maketitle,mathcal,text,headers,email,url},}
\DeclareTotalTCBox{\code}{ v O{} }
{ 
  fontupper=\ttfamily\color{black},
  nobeforeafter,
  tcbox raise base,
  colback=codebackground,colframe=white,
  top=0pt,bottom=0pt,left=0mm,right=0mm,
  leftrule=0pt,rightrule=0pt,toprule=0mm,bottomrule=0mm,
  boxsep=0.5mm,
  #2}{#1}
\patchcmd\newpage{\vfil}{}{}{}
\title{Mean field control problems for Vaccine distribution\thanks{Submitted to the editors \today
\funding{This work is supported by AFOSR MURI FA9550-18-1-0502.}}}
\author{{Wonjun Lee}
\thanks{Department of Mathematics, University of California, Los Angeles (\email{wlee@math.ucla.edu}, \email{siting6@math.ucla.edu},  \email{sjo@math.ucla.edu.})}
\and {Siting Liu}\footnotemark[2]
\and {Wuchen Li}
\thanks{Department of Mathematics, University of South Carolina (\email{wuchen@mailbox.sc.edu})}
\and {Stanley Osher}\footnotemark[2]}
\title{Mean field control problems for Vaccine distribution\thanks{Submitted to the editors \today
\funding{This work is supported by AFOSR MURI FA9550-18-1-0502.}}}
\author{{Wonjun Lee}
\thanks{Department of Mathematics, University of California, Los Angeles (\email{wlee@math.ucla.edu}, \email{siting6@math.ucla.edu},  \email{sjo@math.ucla.edu.})}
\and {Siting Liu}\footnotemark[2]
\and {Wuchen Li}
\thanks{Department of Mathematics, University of South Carolina (\email{wuchen@mailbox.sc.edu})}
\and {Stanley Osher}\footnotemark[2]}
\begin{document}

\maketitle
\begin{abstract}
With the invention of the COVID-19 vaccine, shipping and distributing are crucial in controlling the pandemic. In this paper, we build a mean-field variational problem in a spatial domain, which controls the propagation of pandemics by the optimal transportation strategy of vaccine distribution. Here, we integrate the vaccine distribution into the mean-field SIR model designed in \cite{lee2020controlling}. Numerical examples demonstrate that the proposed model provides practical strategies for vaccine distribution in a spatial domain.
\end{abstract}
\section{Introduction}
The COVID-19 pandemic has affected society significantly.
Various actions are taken to mitigate the spread of the infections, such as the travel ban, social distancing, and mask-wearing.
The recent invention of the vaccine yields breakthroughs in fighting against this infectious disease. 
According to the recent effectiveness study~\cite{Francis2021efficacy}, vaccines including Pfizer, Moderna, and Janssen (J\&J) show approximately $66\%$-$95\%$ efficacy at preventing both mild and severe symptoms of COVID-19.
Therefore, the deployment of COVID-19 vaccines is an urgent and timely task.
Many countries have implemented phased distribution plans that prioritize the elderly and healthcare workers getting vaccinated. 
Meanwhile, the shipping of vaccines is expensive due to the cold chain transportation~\cite{Lin2020Vaccine}.
An effective distribution strategy is necessary to eliminate infectious diseases and prevent more death.

In this work, we propose a novel mean-field control model based on \cite{lee2020controlling}. 
We consider two approaches (controls) to control the pandemic:  relocation of populations and distribution of vaccines. The first one has been discussed thoroughly in~\cite{lee2020controlling}, where we address the spatial effect in pandemic modeling by introducing a mean-field control problem into the spatial SIR model. By applying spatial velocity to the classical disease model, the model finds the most optimal strategy to relocate the different populations (susceptible, infected, and recovered), controlling the epidemic's propagation.
We considered several aspects of the vaccine in our model for vaccine distribution, including manufacturing, delivery, and consumption.
Our goal is to find an optimal strategy to move the population and distribute vaccines to minimize the total number of infectious, the amount of movement of the people, and the transportation cost of the vaccine with limited vaccine supply. 
To tackle this question, we ensemble these two controls and propose the following constrained optimization problem: 

\begin{equation*}
        \min_{(\rho_i,v_i)_{i\in\{S,I,R,V\}},f} \;{G}\left((\rho_i,v_i)_{i\in\{S,I,R,V\}},f\right) \quad \text{($G$ defined from \eqref{eq:var_vacc})}\\
\end{equation*}
subject to 
\begin{equation*}
  \left\{ \begin{aligned}
       & \partial_t \rho_S + \nabla \cdot (\rho_S v_S) = - \beta \rho_S K * \rho_I + \frac{\eta_S^2}{2} \Delta \rho_S - \theta_1 \rho_V \rho_S && (t,x)\in(0,T)\times\Omega\\
        &\partial_t \rho_I + \nabla \cdot (\rho_I v_I) = \beta \rho_S K * \rho_I - \gamma \rho_I + \frac{\eta^2_I}{2} \Delta \rho_I && (t,x) \in(0,T)\times\Omega\\
        &\partial_t \rho_R + \nabla \cdot (\rho_R v_R) = \gamma \rho_I + \frac{\eta^2_R}{2} \Delta \rho_R + \theta_1 \rho_V \rho_S && (t,x) \in(0,T)\times\Omega\\
        &\partial_t \rho_V = f(t,x) - \theta_2 \rho_V \rho_S && (t,x) \in(0,T')\times\Omega\\
        &\partial_t \rho_V + \nabla \cdot (\rho_V v_V)  = - \theta_2 \rho_V \rho_S && (t,x) \in[T',T)\times\Omega\\
    \end{aligned}\right.
\end{equation*}
and
 \begin{equation*}
  \left\{    \begin{aligned}
        &0 \leq f(t,x) \leq f_{max} && (t,x) \in [0,T'] \times \Omega_{factory}\\    
        &f(t,x) = 0 && (t,x) \in [0,T'] \times \Omega \backslash \Omega_{factory}\\
        & \rho_V(t,x) \leq C_{factory} && (t,x) \in [0,T'] \times \Omega_{factory}
    \end{aligned}\right.
 \end{equation*}

In our model, different populations are described using $\rho_i$ ($i\in\{S, I, R\}$), representing the susceptible, infectious, and recovered. The term $\rho_V(x,t)$ describes the density distribution of the vaccine over the spatial domain at location $x$ and time $t$. 
The control variables $v_i$ ($i\in\{S, I ,R \}$) create velocity fields over time-space domain that move the corresponding populations.
As for vaccines, the control variable $v_V$ represents the vaccine's transportation strategy, and the control variable $f(t,x)$ describes how many vaccines are produced at a specific time and location.  
The optimization objective function $G$ is the sum of terminal costs $\mathcal{E}_{final}$ and running costs  $\mathcal{E}_{running}$. 
The terminal costs $\mathcal{E}_{final}$ represent the goal of our control to achieve at the terminal time, such as minimizing the total number of infectious individuals and maximizing the total number of recovered (immune) persons. The running costs $\mathcal{E}_{running}$ include the costs of transportation of vaccines and different classes of the populations, etc. We will discuss more details of cost functionals in Section~\ref{subsection:cost-function}.
As for constraints of our optimization problem, the five partial differential equations of $\rho_i$, $v_i$ ($i\in\{S, I, R, V\}$) describe the dynamics of the different classes of population and vaccines in terms of densities and velocities. The inequalities of $f(t,x)$ model the limitation of vaccine manufacturing. Vaccines are produced at particular factory locations $\Omega_{factory}$ with a daily maximal production rate $f_{max}$.
The dynamics of the vaccine density $\rho_V$ share some similar aspects to the unnormalized optimal transport~\cite{lee2021generalized}. Specifically, they both study mass transportation with a source term that creates masses.

{
We solve the main problem using the algorithm based on the first-order Primal-Dual Hybrid Gradient (PDHG) method~\cite{champock11,champock16}. Due to the multiplicative interaction terms, $\rho_S K*\rho_I$, $\rho_I K*\rho_S, \rho_V\rho_S$, the optimization problem is based on nonlinear PDE constraints, whereas the PDHG only considers linear constraints. We use the extension of the PDHG~\cite{clason2017primal} that solves nonsmooth optimization problems with nonlinear operators between function spaces. We extend the method utilizing the preconditioning operator from~\cite{JacobsLegerLiOsher2018_solvinga} which provides a suitable choice of variable norms to achieve a convergence rate independent of the nonlinear operator. As a result, the algorithm converges to the saddle point locally with step length parameters independent of the finite-difference mesh size; see Section~\ref{section:algorithms-subsection:properties} for details.
}

Lots of mathematical models have been invented to predict the future of COVID-19 epidemics. Recently proposed models take more real-world situations into consideration and tend to be more effective in quantitative forecasting. Specifically, there have been studies on the impact of actions such as lockdown, social distancing, wearing a mask \cite{dimarco2020social,di2020impact,flaxman2020estimating}.
Data-driven approach and machine learning techniques are also integrated to estimate the parameters for the epidemic better and boost the prediction of the trend of the pandemic model \cite{sesterhenn2020adjoint, ndiaye2020analysis}.
Meanwhile, optimal control serves as an important tool in pandemic control. 
They seek the optimal strategy to minimize the total number of infected people while keeping certain costs at a minimum.
There are work focused on mitigating the epidemic with limited medical supply, such as ICU capacity \cite{charpentier2020covid}, face masks \cite{liu2020optimal}, and vaccines \cite{zaman2008stability,hansen2011optimal,kim2016constrained,libotte2020determination,jang2020optimal}. 
In \cite{jang2020optimal}, an optimal vaccine distribution strategy is proposed with a limited total amount of vaccines and maximal daily supply.
\cite{libotte2020determination} first uses an inverse problem to determine the parameters of the SIR model. Then it formulates two optimal control problems, with mono- and multi-objective, and solves for the optimal strategy of vaccine administration. 
Other non-pharmaceutical interventions are also considered in the scope of optimal control of epidemics, including social distancing, closing schools, and lockdown \cite{godara2021control,kantner2020beyond,silva2021optimal}.
\cite{kantner2020beyond} computes the optimal non-pharmaceutical intervention strategy based on an extended SEIR model with the absence of the vaccine.
The mean-field control problem can be viewed as a particular type of optimal control applied to an individual in terms of population density. 

Mean-field game (control), introduced by \cite{huang2006large,lasry2007mean}, describes the deterministic (stochastic) differential games as the number of players tends to infinity, where a given player interacts through the distribution of all players in the state-space. 
It is a thriving research direction with applications in economics, crowd motion, industrial engineering, and more~\cite{aurell2020optimal,doncel2020mean,laguzet2015individual}.
Numerical methods are invented to obtain quantitative information of such mean-field game (control) models, especially when the state-space is in high dimensions \cite{achdou2010mean,achdou2020mean,briceno2019implementation,ruthotto2020machine}.
Multi-population mean-field game (control) problems have also drawn lots of attention \cite{bensoussan2018mean,cirant2015multi,feleqi2013derivation}. This type of problem studies the interactions on two levels: between agents of the same population and between populations.
Our model is a multi-population mean-field control problem with population dynamics described using reaction-diffusion equations adopted from the epidemic model and the controls over the vaccine production and distribution.
Therefore, we obtain a novel mean-field control problem.

The rest of the paper is organized as follows. Section~\ref{section:models} proposes a novel multi-population mean-field control model and explains how population movement and vaccine distribution are integrated into a constrained optimization problem.
Section~\ref{section:algorithms} discusses the challenges in numerically solving this mean-field control model, proposes a first-order primal-dual algorithm to solve it, and shows the local convergence of the algorithm.
Lastly, in Section~\ref{sec:experiments}, we present numerical experiments with different model parameter choices and discuss their implications on mean-field controls. 

\section{Models}\label{section:models}
{In this section, we review the classical SIR model. Based on it, we formulate the spatial SIR dynamics with vaccine distribution, namely SIRV dynamics. We then introduce a variational problem to control the SIRV dynamics.}
\subsection{Classical SIR model}
The SIR epidemic model describes an infectious disease epidemic via an ordinary differential equation system 
\begin{equation*}
\left\{    \begin{aligned}
    &\frac{dS}{dt} = -\beta IS\\
    &\frac{dI}{dt} = \beta IS - \gamma I \\
    &\frac{dR}{dt} = \gamma I .
    \end{aligned}\right.
\end{equation*}
The population is divided into three classes: susceptible, infected and recovered. While assuming a closed population without births or deaths, the model uses $S(t), I(t)$, and $R(t)$ to represent the number in each compartment at time $t$.
The SIR model has two parameters:
$\beta$ is the effective contact rate of the susceptible individual being infected and
$\gamma$ is the recovery rate of the infected individual.
The simplicity of this model allows people to predict an infectious disease epidemic by only estimating a few parameters. 
However, it has limitations by assuming the population is homogeneous-mixing, which means that every individual has an equal probability of disease-causing contact.
As a result, the predictions will lack spatial information and may not help the (local) governments make policies or relocate medical resources.
Therefore, we are motivated to study the spatial SIR model.
On the other hand, the SIR model does not consider the latent period between when a person is exposed to a disease and when they become infected.
This leads to the extension of the SIR model, such as the SEIR model.
Our proposed model has a flexible structure and can naturally be generalized to such epidemiological models.

\subsection{Spatial SIR variational problem with vaccine distribution}

In \cite{lee2020controlling}, we add the spatial dimension to the $S$, $I$, $R$ functions. Let $\Omega\subset \mathbb{R}^d$ be a bounded domain. Consider the following density functions
\begin{equation*}
    \rho_S, \rho_I, \rho_R:\ [0,T]\times \Omega \rightarrow [0,\infty).
\end{equation*}
Here, $\rho_S$, $\rho_I$, and $\rho_R$ represent susceptible, infected, and recovered populations distribution, respectively. 
We assume $\rho_i$ for each $i\in \{S,I,R\}$ moves over a spatial domain $\Omega$ with a velocity $v_i$. Here $v_i, i\in \{S,I,R\}$ are our controls variables. With change of variables $m_i=\rho_i v_i$, we define the momentum
\begin{equation*}
m_S, m_I, m_R \colon [0,T]\times \Omega \rightarrow\mathbb{R}^d
\end{equation*}
that govern the corresponding density flows.
In the following, instead of using control variables $v_i$, we replace them with $\frac{m_i}{\rho_i}$ and regard $m_i$ as the control variables.

We can describe the flows of the densities by the following continuity equations.
\begin{equation}\label{eq:continuity_orig}
  \left\{ \begin{aligned}
       & \partial_t \rho_S + \nabla \cdot m_S = - \beta \rho_S K * \rho_I + \frac{\eta_S^2}{2} \Delta \rho_S\\
        &\partial_t \rho_I + \nabla \cdot m_I = \beta \rho_I K * \rho_S - \gamma \rho_I + \frac{\eta^2_I}{2} \Delta \rho_I\\
        &\partial_t \rho_R + \nabla \cdot m_R = {\gamma \rho_I} + \frac{\eta^2_R}{2} \Delta \rho_R\\
        & \rho_S(0,\cdot), \rho_I(0,\cdot), \rho_R(0,\cdot) \text{ are given.}
    \end{aligned}\right.
\end{equation}
This system of continuity equations describes the flows of three groups of densities while satisfying the SIR model. The nonnegative constants $\eta_i$ ($i\in \{S,I,R\}$) are the coefficients for viscosity terms. These terms can also be understood as noise terms generated by the data. $K=K(x,y)$ is a symmetric positive definite kernel with $(K*\rho)(x,t) = \int_{\Omega}K(x,y)\rho(y,t)\,dy$. In this model, we consider the Gaussian kernel
\begin{equation*}
    \begin{aligned}
        K(x,y) &= \frac{1}{\sqrt{(2\pi)^d}} \prod^d_{k=1} \frac{1}{\sigma_k} \exp{\left(-\frac{|x_k-y_k|^2}{2\sigma_k^2}\right)}.
    \end{aligned}
\end{equation*}
The kernel convolution describes the spreading rate of infectious disease over the spatial domain. In addition, we assume the Neumann boundary conditions on $\partial \Omega$. Since we don't consider birth or death in our model, the total population is conserved for all time $t\in[0, T]$, which leads to the following equality
\[
    \frac{\partial}{\partial t} \int_\Omega \rho_S(t,x)+\rho_I(t,x)+\rho_R(t,x) dx = 0.
\]

In this paper, we consider the optimization problem for the distribution of vaccines. We add an extra function $\rho_V:[0,T]\times\Omega\rightarrow[0,\infty)$ which represents the vaccine density in $\Omega$ at each time $t\in[0,T]$. The vaccine distribution will be described as the following PDE:
\begin{equation}\label{eq:vaccine-pde}
    \begin{aligned}
        &\partial_t \rho_V = f(t,x) - \theta_2 \rho_V \rho_S && t\in(0,T')\\
        &\partial_t \rho_V + \nabla \cdot m_V = - \theta_2 \rho_V \rho_S && t\in[T',T), \quad 0<T'<T,
    \end{aligned}
\end{equation}
where $m_V:[T',T)\times\Omega\rightarrow\mathbb{R}^d$ is a momentum, $\theta_2$ represents the utilization rate of vaccines, and $f:(0,T')\times\Omega\rightarrow[0,\infty)$ represents the production rate of vaccines in $x \in \Omega$ at $0<t<T'$. During $0<t<T'$, the vaccines are produced with a production rate $f$ and used at a rate $\theta_2 \rho_V\rho_S$. During $T'\leq t < T$, the vaccines are delivered to the area where the susceptible population is located, and they are used at a rate of $\theta_2 \rho_V \rho_S$. In summary, the first part of the PDE describes vaccines' production, and the second part describes the delivery of vaccines. For all time $0<t<T$, the susceptible population is vaccinated if the vaccines are available in the same area. Now we are ready to introduce the new system of equations for the SIRV model.
\begin{equation}\label{eq:continuity_vacc}
  \left\{ \begin{aligned}
       & \partial_t \rho_S + \nabla \cdot m_S = - \beta \rho_S K * \rho_I + \frac{\eta_S^2}{2} \Delta \rho_S - \theta_1 \rho_V \rho_S && (t,x)\in(0,T)\times\Omega\\
        &\partial_t \rho_I + \nabla \cdot m_I = \beta \rho_S K * \rho_I - \gamma \rho_I + \frac{\eta^2_I}{2} \Delta \rho_I && (t,x) \in(0,T)\times\Omega\\
        &\partial_t \rho_R + \nabla \cdot m_R = \gamma \rho_I + \frac{\eta^2_R}{2} \Delta \rho_R + \theta_1 \rho_V \rho_S && (t,x) \in(0,T)\times\Omega\\
        &\partial_t \rho_V = f(t,x) - \theta_2 \rho_V \rho_S && (t,x) \in(0,T')\times\Omega\\
        &\partial_t \rho_V + \nabla \cdot m_V  = - \theta_2 \rho_V \rho_S && (t,x) \in[T',T)\times\Omega\\
        & \rho_S(0,\cdot), \rho_I(0,\cdot), \rho_R(0,\cdot), \rho_V(0,\cdot) \text{ are given.}
    \end{aligned}\right.
\end{equation}
In the first and third equations, we add the terms $ - \theta_1 \rho_V \rho_S$ and $ + \theta_1 \rho_V \rho_S$, respectively. The constant $\theta_1$ represents the vaccine efficiency and $\theta_1 \rho_V(t,x) \rho_S(t,x)$ represents the vaccinated population at $(t,x) \in (0,T)\times\Omega$. We denote a set $\mathbb{S} := \{S,I,R,V\}$ and define a nonlinear operator $A$ as follows
\begin{equation}\label{eq:nonlinear-operator-A}
\begin{split}
    A((\rho_i,m_i)_{i\in\mathbb{S}}, f) := (& \partial_t \rho_S + \nabla \cdot m_S - \frac{\eta_S^2}{2} \Delta \rho_S + \beta\rho_S K * \rho_I + \theta_1 \rho_S \rho_V,\\
              & \partial_t \rho_I + \nabla \cdot m_I - \frac{\eta_I^2}{2} \Delta \rho_I - \beta\rho_S K * \rho_I + \gamma \rho_I,\\
              &\partial_t \rho_R + \nabla \cdot m_R - \frac{\eta_R^2}{2} \Delta \rho_R  - \gamma \rho_I - \theta_1 \rho_S \rho_V,\\
              &\partial_t \rho_V - f \mathcal{X}_{[0,T')}(t) + \nabla \cdot m_V \mathcal{X}_{[T',T]}(t) + \theta_2 \rho_S \rho_V ),
\end{split}
\end{equation}
where $\mathcal{X}_C:[0,T]\rightarrow \mathbb{R}$ is a step function that equals $1$ on $C$ and $0$ otherwise. 

\subsection{The cost functional}\label{subsection:cost-function}

The cost functional we propose in this paper is the extension of~\cite{lee2020controlling}. We design the cost functional so that the solution $(\rho_i, m_i)$, $i\in\mathbb{S}$ satisfies the following criteria:

\medskip

\begin{enumerate}[label=(\roman*)]
    \item minimize the transportation cost for moving each population; \label{item1}
    \item minimize the total number of infected people and the total number of susceptible people by maximizing the usage of the vaccines at time $T$; \label{item2}
    \item maximize the total number of recovered people at time $T$; \label{item6}
    \item avoid high concentration of population and vaccines at each time $t\in(0,T)$; \label{item3}
    \item minimize the amount of vaccines produced during $t \in (0,T')$; \label{item4}
    \item minimize the transportation cost for delivering vaccines during $t \in (T',T)$. \label{item5}
\end{enumerate}

\medskip

Item \ref{item1} can be described by
\[
    \int^T_0 \int_\Omega F_i(\rho_i(t,x), m_i(t,x)) dx\, dt,
\]
for $i\in\{S,I,R\}$ where
\begin{equation}\label{eq:kinetic}
    F_i (\rho_i, m_i) =
    \begin{cases}
        \frac{\alpha_i |m_i|^2}{2 \rho_i} & \text{if } \rho_i > 0\\
        0 & \text{if } \rho_i = 0 \text{ and } |m_i| = 0\\
        \infty & \text{if } \rho_i = 0 \text{ and } |m_i| > 0,
    \end{cases}
\end{equation}
which is convex, lower semi-continuous, and $1$-homogeneous with respect to $(\rho_i,m_i)$.
The parameter $\alpha_i$ characterizes the cost of moving $\rho_i$ with velocity $\frac{m_i}{\rho_i}$. Larger $\alpha_i$ means it is more expensive to move $\rho_i$. Note that this function comes from the quadratic kinetic energy. To see this, we use the definition $m_i = \rho_i v_i$ and plug into the formula~\eqref{eq:kinetic}:
\[
    F_i(\rho_i,m_i) = \frac{\alpha_i |m_i|^2}{2 \rho_i} = \frac{\alpha_i}{2} \rho_i |v_i|^2.
\]

Item \ref{item2} and \ref{item6} can be described by the terminal costs of the cost functional
\begin{align*}
    \mathcal{E}_i(\rho_i(T,\cdot)) &= \int_\Omega e_i(\rho_i(T,x))\, dx \quad (i=S,I,V),\\
    \mathcal{E}_R(\rho_R(T,\cdot)) &= \int_\Omega e_R\left(1 - \rho_R(T,x)\right)\, dx,
\end{align*}
where functions $e:[0, \infty)\rightarrow [0, \infty)$ are convex and lower semi-continuous functions. We also minimize the terminal cost for $\rho_V$ because maximizing the usage of vaccines is equivalent to minimizing the number of vaccines left at the terminal time $T$. The total number of the recovered can be maximized by penalizing the density at the terminal time if the value of $\rho_R(T,x)$ is far away from $1$ for $x\in\Omega$. In this paper, we use a quadratic cost function
\begin{equation}\label{eq:item:e-quadratic}
    e_i(t) = \frac{a_i}{2}t^2, \quad (t\in[0,\infty))
\end{equation}
where $a_i$ is some constant.

For Item \ref{item3}, the cost functional for the concentration of the total population and vaccines can be represented by
\[
    \int^T_0 \mathcal{G}_P(\rho_S(t,\cdot) + \rho_I(t,\cdot) + \rho_R(t,\cdot))\,dt,\quad \int^T_0 \mathcal{G}_V(\rho_V(t,\cdot))\,dt,
\]
where
\begin{equation}\label{eq:definition-I}
    \mathcal{G}_P(u) = \int_\Omega g_P(u(x))\,dx,\quad \mathcal{G}_V(u) = \int_\Omega g_V(u(x))\,dx,
\end{equation}
for $u:\Omega\rightarrow [0,\infty)$ and convex and lower semi-continuous functions $g_P, g_V:[0,\infty)\rightarrow[0,\infty)$. Similar to $e_i$~\eqref{eq:item:e-quadratic} from Item~\ref{item2}, we use quadratic functions for $g_P$ and $g_V$.

Items~\ref{item4} and~\ref{item5} are criteria specific to the vaccine distribution. From the PDE~\eqref{eq:vaccine-pde}, the vaccines are produced during $0<t<T'$ by a function $f$. We use the similar functional~\eqref{eq:definition-I} to minimize the amount of vaccines produced by $f$. Thus, we set the functional
\[
    \int^{T'}_0 \mathcal{G}_0 (f(t,\cdot)) \, dt = \int^{T'}_0 \int_\Omega g_0(f(t,x))\, dx \, dt
\]
where $g_0:[0,\infty)\rightarrow[0,\infty)$ is a convex and lower semi-continuous function.

The vaccines are delivered during $T' < t< T$. Similar to the Item~\ref{item1}, we set
\[
    \int^T_{T'} \int_\Omega F_V(\rho_V, m_V) \, dx\,dt,
\]
where $F_V$ has the same definition as \eqref{eq:kinetic}.

The total cost functional we consider is then
\begin{equation}\label{eq:var_vacc}
    \begin{split}
        {G}((\rho_i,m_i)_{i\in\mathbb{S}},f) &= \sum_{i\in\mathbb{S}} \mathcal{E}_i(\rho_i(T,\cdot)) \\
        & + \int^T_0  \int_\Omega \sum_{i = S,I,R} F_i(\rho_i, m_i) \,dx\, dt +\int^T_{T'} \int_\Omega F_V(\rho_V,m_V)\,dx\, dt\\
        & + \int^T_0 \mathcal{G}_P((\rho_S + \rho_I + \rho_R)(t,\cdot)) + \mathcal{G}_V(\rho_V(t,\cdot)) \, dt\\
        & + \int^{T'}_0 \mathcal{G}_0(f(t,\cdot))\, dt\\
        & + \frac{\lambda}{2}\int^T_0\int_\Omega f^2 + \sum_{i\in\mathbb{S}} \rho_i^2 + |m_i|^2 \,dx\,dt.
    \end{split}
\end{equation}
In the perspective of a control problem, the first term at the right-hand side in~\eqref{eq:var_vacc} is the terminal cost, while the rest of the terms accounts for the running costs. The quadratic terms in the last line is a $\lambda$-strongly convex functional. The functional $F$ is  $\lambda$-strongly convex if for any $u=((\rho_i,m_i)_{i\in\mathbb{S}},f)$, $F$ satisfies
\[
    F(\tilde u) \geq F(u) + \partial F(u)(\tilde u-u) + \frac{\lambda}{2} \|\tilde u-u\|_{L^2}^2, \quad \text{for all } \tilde u=((\tilde\rho_i,\tilde m_i)_{i\in\mathbb{S}},\tilde f)
\]
where $\|\tilde u - u\|_{L^2}^2$ is defined as
\[
    \int^T_0 \int_\Omega (\tilde f - f)^2 + \sum_{i\in\mathbb{S}}(\tilde \rho_i - \rho_i)^2+|\tilde m_i - m_i|^2\,dx\,dt
\]
and $\partial F$ denotes the convex subdifferential of $F$. Since $\mathcal{E}_i$, $F_i$, $\mathcal{G}_i$ are convex and lower-semicontinuous, $G$ is  $\lambda$-strongly convex as the sum of convex and  $\lambda$-strongly convex functionals. The strong convexity of $G$ is important as the algorithm of the paper requires the objective cost functional to be strongly convex (Theorem~\ref{thm:local-convergence}).

\subsection{Constraints for vaccine production}\label{subsection:constraints-vaccine-production}

In addition to the constraint from \eqref{eq:continuity_vacc},  we adapt the following constraints to reflect the limited vaccination coverage:
 \begin{equation}\label{eq:constraint2}
    \begin{aligned}
        &0 \leq f(t,x) \leq f_{max} && (t,x) \in [0,T'] \times \Omega_{factory}\\    
        &f(t,x) = 0 && (t,x) \in [0,T'] \times \Omega \backslash \Omega_{factory}\\
        & \rho_V(t,x) \leq C_{factory} && (t,x) \in [0,T'] \times \Omega_{factory}
    \end{aligned}
 \end{equation}
 where $\Omega_{factory} \subset \Omega$ indicates the factory area where vaccines are produced and $f_{max}$ is a nonnegative constant representing the maximum vaccine production rate. In the third inequality, a nonnegative constant $C_{factory}$ limits the total number of vaccines produced  during $0 < T < T'$.
 \[
    \int_0^{T'} \int_{\Omega} \rho_V(t,x)\,dx\,dt \leq C_{factory} T' |\Omega_{factory}|.
 \]
 The constraints~\eqref{eq:constraint2} can be imposed by having the following functionals for $\mathcal{G}_V$ and $\mathcal{G}_0$.
 \begin{equation}\label{eq:constraint-V-0}
     \begin{aligned}
         \mathcal{G}_V(\rho_V(t,\cdot))& = \int_\Omega g_V(\rho_V(t,x))\, dx + i_{[-\infty, C_{factory})}(\rho_V(t,\cdot))\\
         \mathcal{G}_0(f(t,\cdot)) &= \int_\Omega g_0 (f(t,x)) + i_{\Omega_{factory}}(x)f(t,x) \, dx + i_{[-\infty, f_{max})}(f(t,\cdot))
     \end{aligned}
 \end{equation}
where $\Omega_{factory} \subset \Omega$ indicates the factory area where vaccines are produced. The functionals $i_{[-\infty, C_{factory}]}$ and $i_{[-\infty, f_{max}]}$ are defined as
\begin{equation*}
    \begin{aligned}
        i_{[a,b]} (u)=
        \begin{cases}
          0, & a \leq u(x) \leq b\quad \text{for all }x\in\Omega\\
          \infty, & \text{otherwise}
        \end{cases}
    \end{aligned}
\end{equation*}
where $a,b$ are constants and $u:\Omega\rightarrow\mathbb{R}$ is a function. The function $i_{\Omega_{factory}}(x)$ is defined as
\begin{equation*}
    i_{\Omega_{factory}}(x) = 
    \begin{cases}
        0, & x \in \Omega_{factory}\\
        \infty, & x \in \Omega \backslash \Omega_{factory}.
    \end{cases}
\end{equation*}
This function forces $f(t,x)=0$ if $(t,x)\in(0,T')\times(\Omega\backslash\Omega_{factory})$, thus vaccines are produced only in $\Omega_{factory}$. 

\begin{remark}
    The formulation is not limited to SIR epidemic model. For example, we can describe the SIRD (Susceptible-Infected-Recovered-Deceased) epidemic model by adding an extra population $\rho_D$ for the deceased population with a mortality rate $\mu$.
    \begin{equation*}
       \begin{cases}
          \partial_t \rho_S + \nabla \cdot m_S = - \beta \rho_S K * \rho_I + \frac{\eta_S^2}{2} \Delta \rho_S - \theta_1 \rho_V \rho_S & (t,x)\in(0,T)\times\Omega\\
        \partial_t \rho_I + \nabla \cdot m_I = \beta \rho_S K * \rho_I - \gamma \rho_I - \mu \rho_I + \frac{\eta^2_I}{2} \Delta \rho_I  & (t,x) \in(0,T)\times\Omega\\
        \partial_t \rho_R + \nabla \cdot m_R = \gamma \rho_I + \frac{\eta^2_R}{2} \Delta \rho_R + \theta_1 \rho_V \rho_S  & (t,x) \in(0,T)\times\Omega\\
        \partial_t \rho_D = \mu \rho_I + \frac{\eta^2_D}{2} \Delta \rho_D   & (t,x) \in(0,T)\times\Omega\\
        \partial_t \rho_V = f(t,x) - \theta_2 \rho_V \rho_S & (t,x) \in(0,T')\times\Omega\\
        \partial_t \rho_V + \nabla \cdot m_V  = - \theta_2 \rho_V \rho_S  & (t,x) \in[T',T)\times\Omega\\
         \rho_S(0,\cdot), \rho_I(0,\cdot), \rho_R(0,\cdot), \rho_D(0,\cdot), \rho_V(0,\cdot) \text{ are given.}
        \end{cases}
    \end{equation*}
\end{remark}

 \subsection{Properties}
From the definition of the cost functional and the constraint~\eqref{eq:continuity_vacc}, we have the following minimization problem:
\begingroup
\allowdisplaybreaks
\begin{align}\label{eq:SIRV-variational-problem}
        &\inf_{(\rho_i,m_i)_{i\in\mathbb{S}},f} \Big\{ {G}((\rho_i,m_i)_{i\in\mathbb{S}}, f): \text{subject to~\eqref{eq:continuity_vacc}}\Big\}.
\end{align}
\endgroup
We first define the inner product of vectors of functions in $L^2$. Given vectors of functions $u=(u_1(t,x),u_2(t,x),\cdots,u_k(t,x))$ and $v=(v_1(t,x),v_2(t,x),\cdots,v_k(t,x))$ with $u_i,v_i: [0,T]\times\Omega\rightarrow \mathbb{R}$, the $L^2$ inner product of vectors $u$ and $v$ and $L^2$ norm of $u$ are defined by 
\begin{equation}\label{eq:definition-of-L2-vector-inner}
    \langle u, v \rangle_{L^2} =  \sum^k_{i=0} (u_i, v_i)_{L^2},\quad \|u\|_{L^2}^2 = \langle u, u \rangle_{L^2} 
\end{equation}
where $(\cdot,\cdot)_{L^2([0,T]\times\Omega)}$ is a $L^2$ inner product such that
\[
    (u,v)_{L^2([0,T]\times\Omega)} = \int^T_0 \int_\Omega u(t,x) v(t,x) \, dx \, dt.
\]
We introduce dual variables $(\phi_i)_{i\in\mathbb{S}}$ for each continuity equation from~\eqref{eq:nonlinear-operator-A}.  Using the dual variables and the definitions of the inner products, we convert the minimization problem into a saddle point problem.
\begin{equation}\label{eq:saddle}
    \inf_{(\rho_i,m_i)_{i\in\mathbb{S}},f} \sup_{(\phi_i)_{i\in\mathbb{S}}}\, \mathcal{L}((\rho_i,m_i,\phi_i)_{i\in\mathbb{S}},f),
\end{equation}
where $\mathcal{L}$ is the Lagrangian functional defined as
\begingroup
\allowdisplaybreaks
\begin{align*}
        &\mathcal{L}((\rho_i,m_i,\phi_i)_{i\in\mathbb{S}},f)\\ 
        &=  {G} ((\rho_i,m_i)_{i\in\mathbb{S}},f) -\langle A((\rho_i,m_i)_{i\in\mathbb{S}},f) , (\phi_i)_{i\in\mathbb{S}} \rangle_{L^2}\\
        &=  {G} ((\rho_i,m_i)_{i\in\mathbb{S}},f)\nonumber\\
        &\quad - \int^T_0 \int_\Omega \phi_S \left( \partial_t \rho_S + \nabla \cdot m_S + \beta \rho_S K * \rho_I + \theta_1 \rho_S \rho_V - \frac{\eta_S^2}{2} \Delta \rho_S \right) \, dx \, dt\nonumber\\
        &\quad - \int^T_0 \int_\Omega \phi_I \left( \partial_t \rho_I + \nabla \cdot m_I - \beta \rho_S K * \rho_I + \gamma \rho_I - \frac{\eta_I^2}{2} \Delta \rho_I \right) \, dx \, dt\nonumber\\
        &\quad - \int^T_0 \int_\Omega \phi_R \left( \partial_t \rho_R + \nabla \cdot m_R - \gamma \rho_I - \theta_1 \rho_S \rho_V - \frac{\eta_R^2}{2} \Delta \rho_R \right) \, dx \, dt\nonumber\\
        &\quad - \int^{T}_0 \int_\Omega \phi_V \left( \partial_t \rho_V - f \mathcal{X}_{[0,T')}(t) + \nabla \cdot m_V \mathcal{X}_{[T',T]}(t) + \theta_2 \rho_S \rho_V  \right) \, dx \, dt.
\end{align*}
\endgroup
For brevity, we denote
\[
    u = ((\rho_i,m_i)_{i\in\mathbb{S}},f),\quad p = (\phi_i)_{i\in\mathbb{S}}.
\]
We can rewrite the Lagrangian as
\begin{equation}\label{eq:original-Lagrangian}
    \mathcal{L}(u,p) = {G}(u) - \langle A(u), p \rangle_{L^2}
\end{equation}
where the nonlinear operator $A(u)$ is defined as
\begin{equation}\label{eq:def-of-nonlinear-op-A}
        A(u) = ( A_S(u), A_I(u), A_R(u), A_V(u) )
\end{equation}
\begin{equation*}
    \begin{aligned}
    A_S(u) &= \partial_t \rho_S + \nabla \cdot m_S - \frac{\eta_S^2}{2} \Delta \rho_S + \beta\rho_S K * \rho_I + \theta_1 \rho_S \rho_V,\\
    A_I(u) &= \partial_t \rho_I + \nabla \cdot m_I - \frac{\eta_I^2}{2} \Delta \rho_I - \beta\rho_I K * \rho_S + \gamma \rho_I,\\
    A_R(u) &= \partial_t \rho_R + \nabla \cdot m_R - \frac{\eta_R^2}{2} \Delta \rho_R - \gamma \rho_I,\\
    A_V(u) &= \partial_t \rho_V - f \mathcal{X}_{[0,T')}(t) + \nabla \cdot m_V \mathcal{X}_{[T',T]}(t) + \theta_1 \rho_S \rho_V.
    \end{aligned}
\end{equation*}
As noted in \cite{lee2020controlling}, the dual gap, the difference between the primal solution and dual solution, may not be zero because the nonconvex functions $(\rho_S,\rho_I) \mapsto \rho_S K * \rho_I$ and $(\rho_S,\rho_V) \mapsto \rho_S \rho_V$ make the feasible set nonconvex. We circumvent the problem by linearizing the nonlinear operator at a base point~$\bar{u}$
\[
    A(u) \approx \bar{A}_{\bar u}(u) = A(\bar{u}) + [\nabla A(\bar{u})](u-\bar{u}).
\]
In our formulation, the linearlized operator $\bar A_{\bar u}(u)$ can be written as follows.
\begin{equation*}
    \begin{aligned}
    \bar A_{\bar u}(u) &= ( \bar A_{S \bar u}(u), \bar A_{I \bar u}(u ), \bar A_{R\bar u}(u), \bar A_{V\bar u}(u) )\\
    \bar A_{S \bar u}(u) &= \partial_t \rho_S + \nabla \cdot m_S - \frac{\eta_S^2}{2} \Delta \rho_S + \beta\rho_S K * \bar \rho_I + \theta_1 \rho_S \bar \rho_V,\\
    \bar A_{I \bar u}(u) &= \partial_t \rho_I + \nabla \cdot m_I - \frac{\eta_I^2}{2} \Delta \rho_I - \beta\rho_I K * \bar \rho_S + \gamma \rho_I,\\
    \bar A_{R \bar u}(u) &= \partial_t \rho_R + \nabla \cdot m_R - \frac{\eta_R^2}{2} \Delta \rho_R  - \gamma \bar \rho_I,\\
    \bar A_{V \bar u}(u) &= \partial_t \rho_V - f \mathcal{X}_{[0,T')}(t) + \nabla \cdot m_V \mathcal{X}_{[T',T]}(t) + \theta_1 \rho_V \bar \rho_S
    \end{aligned}
\end{equation*}
where $\bar u = u = ((\bar \rho_i,\bar m_i)_{i\in\mathbb{S}},\bar f)$.
We define a linearized Lagrangian as
\begin{equation}\label{eq:linearized-Lagrangian}
    \bar{\mathcal{L}}_{\bar u}(u,p) = {G}(u) - \langle \bar{A}_{\bar u}(u), p \rangle_{L^2}.  
\end{equation}
In the paper~\cite{clason2017primal}, the author developed a primal-dual algorithm using the linearized Lagrangian (Algorithm~\eqref{eq:alg-nonlinear-PDHG}) and proves that the sequence $(u^{(k)},p^{(k)})^\infty_{k=1}$ from the algorithm converges to the saddle point $(u_*,p_*)$ (in Section~\ref{section:algorithms-subsection:properties}, we prove the local convergence to the saddle point given $(u^{(0)},p^{(0)})$ is sufficiently close to the saddle point). By the first-order optimality conditions (also known as Karush-Kuhn-Tucker (KKT) conditions), the saddle point satisfies
\begin{equation}\label{eq:linearized-KKT}
\begin{split}
    [\nabla A(u_*)]^T p_* &\in \partial {G}(u_*)\\
    A(u_*) &= 0.
\end{split}
\end{equation}
\noindent In the next proposition, we present the equations derived from the KKT conditions~\eqref{eq:linearized-KKT}. 
\begin{proposition}[Mean-field control SIRV system]\label{proposition:KKT-conditions}
By KKT conditions, the saddle point $((\rho_i, m_i, \phi_i)_{i\in\mathbb{S}},f)$ of
\eqref{eq:saddle} satisfies the following equations.
    \begingroup
    \allowdisplaybreaks
    \begin{align*}\label{MFSIR}
            &\partial_t \phi_S - \frac{\alpha_S}{2} |\nabla \phi_S|^2 + \frac{\eta^2_S}{2}\Delta \phi_S + \frac{\delta \mathcal{G}_P}{\delta \rho}(\rho_S+\rho_I+\rho_R) + \beta (\phi_I - \phi_S) K * \rho_I\\
            &\hspace{4.5cm} + \rho_V\bigl( \theta_1 (\phi_R - \phi_S) - \theta_2 \phi_V)\bigl)= 0 && (t,x)\in (0,T)\times \Omega\\
            &\partial_t \phi_I - \frac{\alpha_I}{2} |\nabla \phi_I|^2 + \frac{\eta^2_I}{2}\Delta \phi_I +  \frac{\delta \mathcal{G}_P}{\delta \rho} (\rho_S+\rho_I+\rho_R)\\
            &\hspace{4.5cm} + \beta K * \left( \rho_S (\phi_I - \phi_S) \right) + \gamma (\phi_R - \phi_I) = 0 && (t,x)\in (0,T)\times \Omega\\
            &\partial_t \phi_R - \frac{\alpha_R}{2} |\nabla \phi_R|^2 + \frac{\eta^2_R}{2}\Delta \phi_R + \frac{\delta \mathcal{G}_P}{\delta \rho} (\rho_S+\rho_I+\rho_R)= 0  && (t,x)\in (0,T)\times \Omega\\
            &\partial_t \phi_V + \frac{\delta \mathcal{G}_V}{\delta \rho} (\rho_V) + \rho_S \bigl( \theta_1 (\phi_R - \phi_S) - \theta_2 \phi_V)\bigl)= 0 && (t,x)\in (0,T')\times \Omega\\
            &\partial_t \phi_V - \frac{\alpha_V}{2}|\nabla\phi_V|^2  + \frac{\delta \mathcal{G}_V}{\delta \rho} (\rho_V)  + \rho_S \bigl( \theta_1 (\phi_R - \phi_S) - \theta_2 \phi_V)\bigl)= 0 && (t,x)\in (T',T)\times \Omega\\
            & \partial_t \rho_S -\frac{1}{\alpha_S}\nabla \cdot (\rho_S\nabla\phi_S) + \beta \rho_S K * \rho_I + \theta_1 \rho_S \rho_V - \frac{\eta_S^2}{2} \Delta\rho_S = 0 && (t,x)\in (0,T)\times \Omega\\
            &\partial_t \rho_I -\frac{1}{\alpha_I}\nabla \cdot(\rho_I \nabla\phi_I) - \beta \rho_S K *\rho_I + \gamma \rho_I - \frac{\eta^2_I}{2} \Delta\rho_I = 0 && (t,x)\in (0,T)\times \Omega\\
            & \partial_t \rho_R -\frac{1}{\alpha_R} \nabla \cdot (\rho_R \nabla\phi_R) - \gamma \rho_I - \theta_1 \rho_S \rho_V - \frac{\eta^2_R}{2} \Delta \rho_R= 0 && (t,x)\in (0,T)\times \Omega\\
            & \partial_t \rho_V - f + \theta_2 \rho_S \rho_V = 0 && (t,x)\in (0,T')\times \Omega\\
            & \partial_t \rho_V -\frac{1}{\alpha_V} \nabla \cdot (\rho_V \nabla\phi_V)  + \theta_2 \rho_S \rho_V = 0 && (t,x)\in (T',T)\times \Omega\\
            & \frac{\delta \mathcal{G}_0}{\delta f}(f) + \phi_V = 0  && (t,x)\in (0,T')\times \Omega\\
            &\phi_i(T,\cdot) = \frac{\delta \mathcal{E}_i}{\delta \rho(T,\cdot)} (\rho_i(T,\cdot)), \quad i \in \mathbb{S}.
    \end{align*}
    \endgroup
    The terms $\frac{\delta \mathcal{G}_P}{\delta \rho}$, $\frac{\delta \mathcal{G}_V}{\delta \rho}$, $\frac{\delta \mathcal{G}_P}{\delta \rho}$, $\frac{\delta \mathcal{G}_0}{\delta f}$, and $\frac{\delta \mathcal{E}_i}{\delta \rho(T,\cdot)}$ are the functional derivatives. In other words, given $F:\mathcal{H}\rightarrow\mathbb{R}$ be a smooth functional where $\mathcal{H}$ is a separable Hilbert space and $\rho\in\mathcal{H}$, we say a map $\frac{\delta F}{\delta \rho}$ is the functional derivative of $F$ with respect to $\rho$ if it satisfies
    \[
        \lim_{\epsilon\rightarrow 0} \frac{F(\rho+\epsilon h) - F(\rho)}{\epsilon} = \int_\Omega \frac{\delta F}{\delta \rho}(\rho(x)) h(x)\, dx,
    \]
    for any arbitrary function $h:\Omega\rightarrow \mathbb{R}$.
\end{proposition}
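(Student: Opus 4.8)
The statement is the calculus-of-variations closure of the saddle point problem \eqref{eq:saddle}, so the plan is to impose first-order stationarity of the Lagrangian $\mathcal{L}(u,p) = {G}(u) - \langle A(u), p\rangle_{L^2}$ in each block of variables. Equivalently, I would unpack the two KKT relations \eqref{eq:linearized-KKT}: the feasibility $A(u_*)=0$ reproduces the state (continuity) equations, while $[\nabla A(u_*)]^T p_* \in \partial {G}(u_*)$, read component by component, produces the adjoint equations for $(\phi_i)_{i\in\mathbb{S}}$, the optimality relation for $f$, and the closure relation for the momenta $m_i$. Since the linearization $\bar A_{\bar u}$ is a first-order Taylor expansion and is therefore exact to first order at the base point $\bar u = u_*$, the formal first variation of $\mathcal{L}$ coincides with that of $\bar{\mathcal{L}}_{\bar u}$ there, and I may work directly with $\mathcal{L}$. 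I would treat the variables in the order $m_i \to f \to \rho_i$, so that the pointwise minimizations in $m_i$ and $f$ can be substituted before the more delicate variations in $\rho_i$.

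First I would vary in the momenta. For $i \in \{S,I,R\}$ on $(0,T)$ and $i=V$ on $(T',T)$, the only $m_i$-dependence in $\mathcal{L}$ is the kinetic cost $\int F_i(\rho_i,m_i)$ together with the coupling $-\int \phi_i\, \nabla\cdot m_i$; integrating the divergence by parts (the Neumann condition kills the boundary flux) and setting the first variation to zero gives the pointwise relation $m_i = -\frac{1}{\alpha_i}\rho_i \nabla\phi_i$, i.e.\ the minimizer of the $1$-homogeneous cost \eqref{eq:kinetic}. Substituting this back into the constraints $A_i(u)=0$ turns $\nabla\cdot m_i$ into $-\frac{1}{\alpha_i}\nabla\cdot(\rho_i\nabla\phi_i)$ and yields the Fokker--Planck--type equations for $\rho_S,\rho_I,\rho_R$ and for $\rho_V$ on $(T',T)$, while the $\rho_V$ equation on $(0,T')$ carries no momentum and is read off directly from $A_V(u)=0$. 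Varying in $f$ on $(0,T')$ is likewise pointwise: the $f$-dependence is $\int_0^{T'}\mathcal{G}_0(f)$ together with $+\int_0^{T'}\phi_V f$ coming from $-\langle A_V,\phi_V\rangle$, whence $\frac{\delta\mathcal{G}_0}{\delta f}(f) + \phi_V = 0$.

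The adjoint equations come from the variations in $\rho_i$, which I would compute by integrating the $\partial_t$ term by parts in time and the viscosity term by parts twice in space. The interior contributions assemble into the Hamilton--Jacobi--type equation for each $\phi_i$: the reduced kinetic term contributes the quadratic Hamiltonian in $|\nabla\phi_i|^2$, the viscosity contributes $+\frac{\eta_i^2}{2}\Delta\phi_i$, and the concentration penalties contribute $\frac{\delta\mathcal{G}_P}{\delta\rho}$ or $\frac{\delta\mathcal{G}_V}{\delta\rho}$. The reaction and vaccination couplings are collected across equations: differentiating the two terms $\pm\beta\rho_S K*\rho_I$ sitting in $A_S$ and $A_I$ with respect to $\rho_S$ produces the nonlocal term $\beta(\phi_I-\phi_S)K*\rho_I$ in the $\phi_S$ equation, and differentiating with respect to $\rho_I$ (which sits inside the convolution) produces $\beta K*(\rho_S(\phi_I-\phi_S))$ in the $\phi_I$ equation after transferring $K$ onto the adjoint factor; likewise the $\theta_1,\theta_2$ products assemble into $\rho_V(\theta_1(\phi_R-\phi_S)-\theta_2\phi_V)$ and $\rho_S(\theta_1(\phi_R-\phi_S)-\theta_2\phi_V)$. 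Finally, the time-boundary terms at $t=T$ combine with the terminal-cost variations to force $\phi_i(T,\cdot)=\frac{\delta\mathcal{E}_i}{\delta\rho(T,\cdot)}(\rho_i(T,\cdot))$, while the $t=0$ boundary terms drop because the initial densities are prescribed and not varied.

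The main obstacles are the nonlocal coupling and the two-phase structure of the vaccine. For the convolution I must carefully move $K$ onto the dual variables by its symmetry and Fubini, which is also what reconciles the two written forms of $A_I$ (one as $\rho_S K*\rho_I$, one as $\rho_I K*\rho_S$): under the integral they agree because $K(x,y)=K(y,x)$. For $\rho_V$ I must differentiate correctly through the indicators $\mathcal{X}_{[0,T')}$ and $\mathcal{X}_{[T',T]}$, take the $m_V$-variation only on $(T',T)$, and check the matching of $\phi_V$ across $t=T'$ so that the two $\phi_V$ equations glue into a single adjoint trajectory. I would also emphasize that, because ${G}$ is only $\lambda$-strongly convex while $A$ is nonlinear (so the feasible set is nonconvex), the resulting system is a set of necessary first-order stationarity conditions obtained from the exact linearization at the saddle point, rather than a certificate of global optimality.
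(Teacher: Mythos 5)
Your proposal is correct and follows essentially the same route as the paper's own proof: form the Lagrangian, take first variations block by block (using integration by parts in time and space, the symmetry of $K$ to transfer the convolution onto the adjoint factors, and the prescribed initial data to kill the $t=0$ boundary terms), obtain $m_i = -\frac{1}{\alpha_i}\rho_i\nabla\phi_i$ from the momentum variation, and substitute back to get the coupled forward--backward system together with the terminal conditions $\phi_i(T,\cdot)=\frac{\delta\mathcal{E}_i}{\delta\rho(T,\cdot)}(\rho_i(T,\cdot))$. The only cosmetic difference is ordering (you vary $m_i\to f\to\rho_i$, while the paper varies $\phi_i$ first and collects the coupling terms into an auxiliary functional $\mathcal{Q}$ before varying $\rho_i$, $m_i$, $f$, and the terminal densities), which does not change the substance of the argument.
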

The dynamical system models the optimal vector field strategies for S, I, R populations and the vaccine distribution. It combines both strategies from mean field controls and SIRV models. For this reason, we call it \textit{Mean-field control SIRV system}.
The proof of Proposition~\ref{proposition:KKT-conditions} can be found in the Appendix.

\section{Algorithms}\label{section:algorithms}


In this section, we propose an algorithm to solve the proposed SIRV variational problem. We use the primal-dual hybrid gradient (PDHG) algorithm~\cite{champock11,champock16}. The PDHG can solve the following convex optimization problem.
\[
    \begin{aligned}
        \min_u\, f(Au) + g(u)
    \end{aligned}
\]
where $f$ and $g$ are convex functions and $A$ is a continuous linear operator. The algorithm solves the problem by converting the problem into a saddle point problem by introducing a dual variable $p$.
\[
    \begin{aligned}
        \min_u\,\max_p\, g(u) + \langle Au, p \rangle_{L^2} - f^*(p)
    \end{aligned}
\]
with $L^2$ inner product is defined in~\eqref{eq:definition-of-L2-vector-inner} and
\[
    f^*(p) = \sup_{u} \, \langle u, p \rangle_{L^2} - f(u)
\] 
is the Legendre transform of $f$. The method solves the saddle point problem by iterating
\begin{equation}\label{eq:alg-PDHG}
    \begin{aligned}
        u^{(k+1)} &= \argmin_u \, g(u) + \langle u, A^T p^{(k)} \rangle_{L^2} + \frac{1}{2\tau} \|u-u^{(k)}\|^2_{L^2}\\
        \tilde{u}^{(k+1)} &= 2 u^{(k+1)} - u^{(k)}\\
        p^{(k+1)} &= \argmax_p \, \langle A \tilde{u}^{(k+1)}, p \rangle_{L^2} - f^*(p) - \frac{1}{2\sigma} \|p-p^{(k)}\|^2_{L^2}.
    \end{aligned}
\end{equation}
The scheme converges if the step sizes $\tau$ and $\sigma$ satisfy
\begin{equation} \label{eq:step-sizes-condition}
    \tau \sigma \|A^T A\|_{L^2} < 1,
\end{equation}
where $\|\cdot\|$ is an operator norm in $L^2$. However, the SIRV variational problem has a nonlinear function $A$ for the constraint. Thus, we use the extension of the algorithm from~\cite{clason2017primal} which solves the nonlinear constrained optimization problem.
\begin{equation}\label{eq:nonlinear-saddle-point}
    \min_u\,\max_p\, g(u) + \langle A(u), p \rangle_{L^2} - f^*(p),
\end{equation}
where $A$ is a nonlinear function. The scheme iterates the algorithm~\eqref{eq:alg-PDHG} with a linear approximation of $A$ at a base point $\bar u$
\[
    A(u) \approx A(\bar u) + [\nabla A(\bar u)](u - \bar u).
\]
Denote $A_{u} := \nabla A(u)$. We have a linearized saddle point problem
\begin{equation}\label{eq:linearlized-saddle-point}
    \min_u\,\max_p\, g(u) + \langle A(\bar u) + A_{\bar u}(u-\bar u), p \rangle_{L^2} - f^*(p)
\end{equation}
and the scheme iterates
\begin{equation}\label{eq:alg-nonlinear-PDHG}
    \begin{split}
        u^{(k+1)} &= \argmin_u \, g(u) + \langle u, A_{u^{(k)}}^T p^{(k)} \rangle_{L^2} + \frac{1}{2\tau^{(k)}} \|u-u^{(k)}\|^2_{L^2}\\
        \tilde{u}^{(k+1)} &= 2 u^{(k+1)} - u^{(k)}\\
        p^{(k+1)} &= \argmax_p \, \langle A(u^{(k)}) + A_{u^{(k)}} (\tilde{u}^{(k+1)} - u^{(k)}) , p \rangle_{L^2} - f^*(p) - \frac{1}{2\sigma^{(k)}} \|p-p^{(k)}\|^2_{L^2}
    \end{split}
\end{equation}
The paper~\cite{clason2017primal} proves that the sequence $\{u^{(k)},p^{(k)}\}^\infty_{k=0}$ of the algorithm converges to some saddle point $(u_*,p_*)$ that satisfies~\eqref{eq:linearized-KKT}. However, the scheme converges if the step sizes satisfy
\[
    \sigma^{(k)} \tau^{(k)} \|\nabla A(u^{(k)})\|^2_{L^2} < 1,\quad k=1,2,\cdots.
\]
Suppose we use an unbounded operator that depends on the grid size, for example, $A = \nabla$. The discrete approximation of the operator norm of $A$ increases as the grid size increases (Figure~\ref{fig:op-norm-grid-size} illustrates the relationship between the norm of an unbounded operator and grid sizes). Thus, the scheme can result in a very slow convergence if we use a fine grid resolution. To circumvent the problem, we use the General-proximal Primal-Dual Hybrid Gradient (G-prox PDHG) method from \cite{JacobsLegerLiOsher2018_solvinga} which is another variation of the PDHG algorithm. This variant provides an appropriate choice of norms for the algorithm, and the authors prove that choosing the proper norms allows the algorithm to have larger step sizes than the vanilla PDHG algorithm. The G-prox PDHG iterates
\begin{equation}\label{eq:alg-G-prox-PDHG}
    \begin{aligned}
        u^{(k+1)} &= \argmin_u \, g(u) + \langle u, A_{u^{(k)}}^T p^{(k)} \rangle_{L^2} + \frac{1}{2\tau^{(k)}} \|u-u^{(k)}\|^2_{L^2}\\
        \tilde{u}^{(k+1)} &= 2 u^{(k+1)} - u^{(k)}\\
        p^{(k+1)} &= \argmax_p \, \langle A(u^{(k)}) + A_{u^{(k)}} (\tilde{u}^{(k+1)} - u^{(k)}) , p \rangle_{L^2} - f^*(p) - \frac{1}{2\sigma^{(k)}} \|p-p^{(k)}\|^2_{\mathcal{H}^{(k)}}.
    \end{aligned}
\end{equation}
where the norm $\|\cdot\|_{\mathcal{H}^{(k)}}$ is defined as
\[
    \|p\|^2_{\mathcal{H}^{(k)}} = \|A_{u^{(k)}}^T p\|^2_{L^2}.
\]
By choosing the proper norms, the step sizes only need to satisfy
\[
    \sigma^{(k)} \tau^{(k)} < 1,\quad k=1,2,\cdots
\]
which are clearly independent of the grid size.

\begin{figure}[h]
    \centering
    \begin{subfigure}[b]{0.3\textwidth}
         \centering
         \includegraphics[width=1\linewidth]{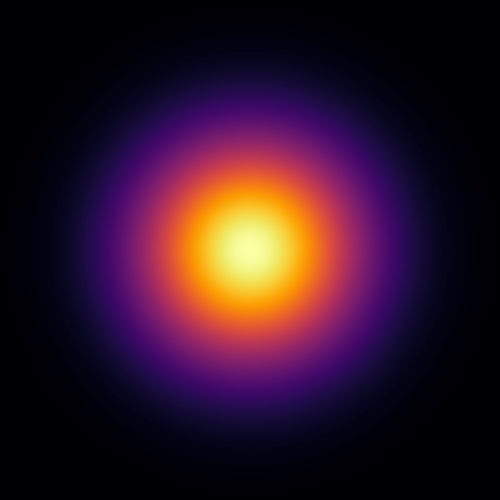}
         \caption{\small $u(x) = e^{-20 |x|^2}$}
     \end{subfigure}
     \hspace{0.5cm}
     \begin{subfigure}[b]{0.5\textwidth}
         \centering
         \includegraphics[width=1\linewidth]{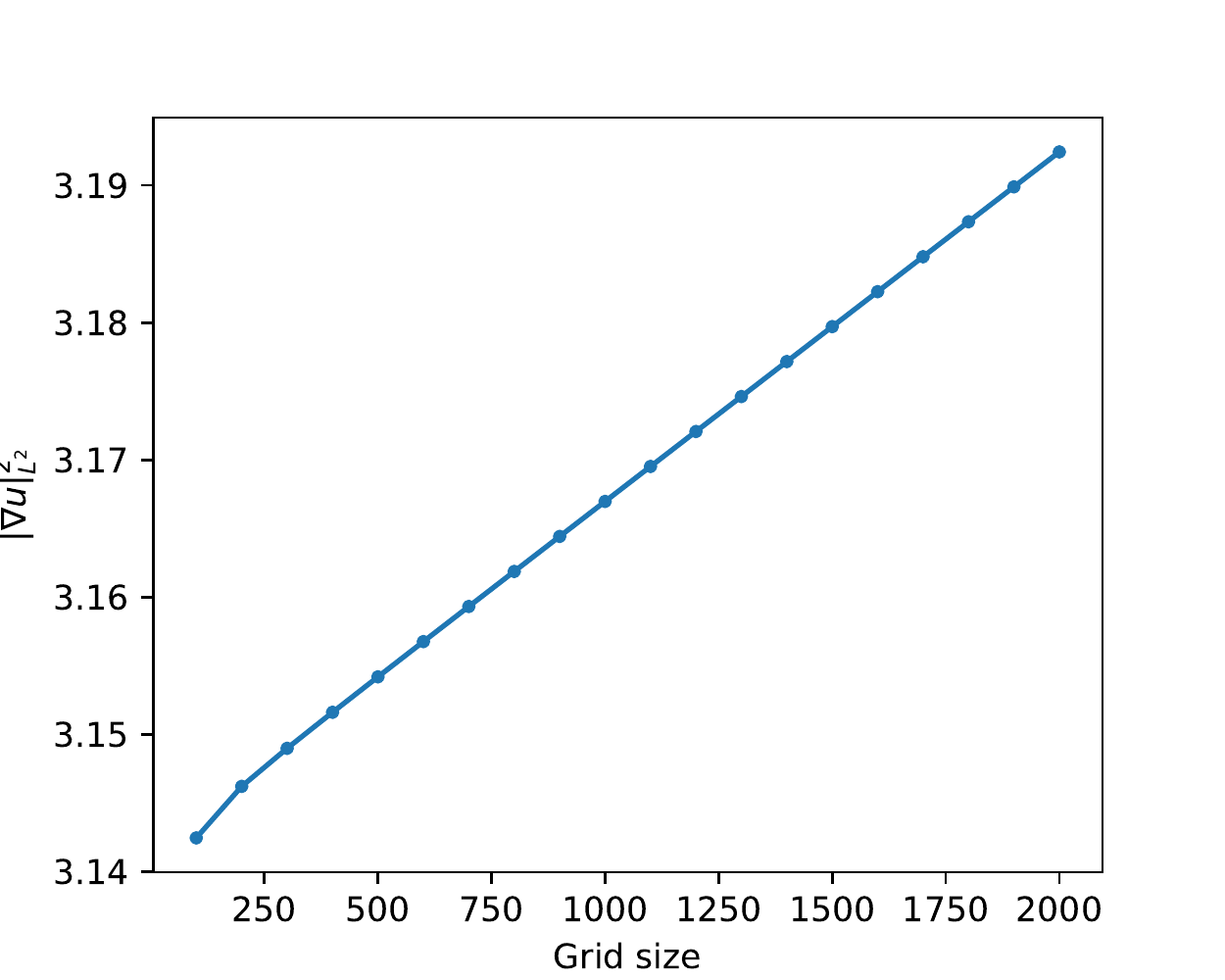}
         \caption{\small Operator norm vs. grid sizes}
     \end{subfigure}
     \hfill
    \caption{\small The image (a) shows $u$ on a unit square domain $[-0.5,0.5]^2$. The plot~(b) shows the discrete approximation of $\|\nabla u\|^2_{L^2} = \int_\Omega |\nabla u(x)|^2 \,dx$ with respect to grid sizes. It shows that in the discrete approximation, the norm of an unbounded operator $\nabla$ increases as grid size increases. }
    \label{fig:op-norm-grid-size}
\end{figure}

\subsection{Local convergence of the algorithm}\label{section:algorithms-subsection:properties}
In this section, we show the iterations from the algorithm~\eqref{eq:alg-G-prox-PDHG} locally converges to the saddle point. The local convergence theorem in this paper is mainly based on the Theorem~2.11 from~\cite{clason2017primal}. However, we add a preconditioning operator from the G-prox PDHG method. We show that the method converges locally to the saddle point with the step sizes independent of the nonlinear operator $A$.

From the algorithm~\eqref{eq:alg-G-prox-PDHG}, $(u^{(k+1)},p^{(k+1)})$ satisfies the following first-order optimality conditions
\begin{equation}
    \begin{split}
        0 &\in \partial g(u^{(k+1)}) + A_{u^{(k)}}^T p^{(k)} + \frac{1}{\tau^{(k)}} (u^{(k+1)} - u^{(k)})\\
        0 &\in A(u^{(k)}) + 2 A_{u^{(k)}} (u^{(k+1)} - u^{(k)}) - \partial f^* (p^{(k+1)}) - \frac{1}{\sigma^{(k)}} A_{u^{(k)}} A_{u^{(k)}}^T (p^{(k+1)}-p^{(k)})
    \end{split}
\end{equation}
which can be rewritten as
\begin{equation}
    \begin{split}
        0 \in H_{u^{(k)}} (q^{(k+1)}) + M^{(k)} (q^{(k+1)} - q^{(k)})
    \end{split}
\end{equation}
with $q=(u,p)$. Here, the monotone operator $H_{\bar u}$ is defined as
\[
    H_{\bar u} (q) := 
    \begin{pmatrix}
    \partial g(u) + A_{\bar u }^T p\\
    \partial f^*(p) - A(\bar u) - A_{\bar u} (u - \bar u)
    \end{pmatrix}
\]
and
\[
    M^{(k)} :=
    \begin{pmatrix}
        \frac{1}{\tau^{(k)}} Id & - A_{u^{(k)}}^T\\
        - A_{u^{(k)}} & \frac{1}{\sigma^{(k)}} A_{u^{(k)}} A_{u^{(k)}}^T
    \end{pmatrix}
\]
where $Id$ is an identity operator.

Recall that from~\eqref{eq:linearized-KKT}, the saddle point $q_*=(u_*,p_*)$ has to satisfy
\[
    0 \in H_{u_*}(u_*,p_*).
\]
Throughout, we assume that
\begin{equation}\label{ass:nabla-A-greater-than-0}
    \|\nabla A(u_*)\| > 0\quad\text{and}\quad\text{$u\mapsto A(u)$ is continuous}.
\end{equation}

\begin{lemma}\label{lemma:nabla-A-bounded}
    There exists constants $0<c<C$ and $R>0$ such that
    \[
        c \leq \|\nabla A(u)\| \leq C,\quad (\|u-u_*\|_{L^2} \leq R)
    \]
    where $\|\cdot\|$ is an operator norm.
\end{lemma}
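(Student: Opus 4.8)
The plan is to exploit the special algebraic structure of the operator $A$ in \eqref{eq:def-of-nonlinear-op-A}: apart from the bilinear terms $\rho_S K*\rho_I$, $\rho_I K*\rho_S$ and the products $\rho_S\rho_V$, every entry of $A(u)$ is \emph{affine} in $u$. Hence $A$ is a quadratic map, its first Fréchet derivative $\nabla A(u)$ depends affinely on $u$, and its second derivative $B:=\nabla^2 A$ is a \emph{constant} bilinear operator independent of $u$. First I would record the identity
\[
    \nabla A(u) = \nabla A(u_*) + B(u-u_*,\,\cdot\,),
\]
which packages the entire $u$-dependence of $\nabla A$ into the single term $B(u-u_*,\cdot)$, linear in $u-u_*$.

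Next I would estimate this term in operator norm. Writing out $[\nabla A(u)-\nabla A(u_*)]w$ for a direction $w=((\delta\rho_i,\delta m_i)_{i\in\mathbb{S}},\delta f)$, all differential operators and linear reaction terms cancel, and only the derivatives of the bilinear terms survive; for the $S$-component, denoting the components of $u_*$ by $\rho_i^*$,
\[
    [\nabla A_S(u)-\nabla A_S(u_*)]w = \beta\bigl(\delta\rho_S\,K*(\rho_I-\rho_I^*) + (\rho_S-\rho_S^*)\,K*\delta\rho_I\bigr) + \theta_1\bigl(\delta\rho_S(\rho_V-\rho_V^*) + (\rho_S-\rho_S^*)\delta\rho_V\bigr),
\]
and analogously for the $I$- and $V$-components. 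Each summand is bilinear in $(u-u_*,w)$, so boundedness of the relevant bilinear maps yields a uniform constant $L_0$ with $\|\nabla A(u)-\nabla A(u_*)\|\leq L_0\|u-u_*\|_{L^2}$. The upper bound then follows from the triangle inequality, $\|\nabla A(u)\|\leq \|\nabla A(u_*)\| + L_0 R =: C$, and the lower bound from the reverse triangle inequality, $\|\nabla A(u)\|\geq \|\nabla A(u_*)\| - L_0 R$. Invoking the standing assumption \eqref{ass:nabla-A-greater-than-0} that $\|\nabla A(u_*)\|>0$, I would fix $R>0$ so small that $L_0 R\leq\tfrac12\|\nabla A(u_*)\|$, giving $c:=\tfrac12\|\nabla A(u_*)\|>0$.

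The main obstacle is verifying that the bilinear maps above are bounded, so that $L_0<\infty$. The convolution terms are harmless: since $K$ is a fixed smooth Gaussian kernel,
\[
    \|g\,(K*h)\|_{L^2} \leq \|g\|_{L^2}\,\|K*h\|_{L^\infty} \leq C_K\,\|g\|_{L^2}\,\|h\|_{L^2}, \qquad C_K := \sup_{x}\|K(x,\cdot)\|_{L^2}<\infty,
\]
so $(g,h)\mapsto g\,(K*h)$ is bounded on $L^2\times L^2$. The delicate term is the pointwise product $(g,h)\mapsto gh$ arising from $\theta_1\rho_S\rho_V$ and $\theta_2\rho_S\rho_V$, which is not bounded from $L^2\times L^2$ into $L^2$ in general. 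I would resolve this in the discretized setting in which the algorithm \eqref{eq:alg-G-prox-PDHG} is actually run: on a fixed finite-difference grid all norms are equivalent and every such bilinear map is automatically bounded, so $B$ is a constant bounded operator and $L_0<\infty$; equivalently, in the continuous setting one restricts to the ball $\|u-u_*\|_{L^2}\leq R$ together with the uniform $L^\infty$ bounds on the densities that hold there, which again make the product bounded. With $L_0$ finite, the two estimates close the argument and produce the claimed constants $0<c<C$ and $R>0$.
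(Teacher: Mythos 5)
Your proof is correct in its essential structure, and it follows the same route as the paper's own (one-sentence) proof: both arguments rest on continuity of $u\mapsto\nabla A(u)$ near $u_*$ combined with the standing assumption~\eqref{ass:nabla-A-greater-than-0} that $\|\nabla A(u_*)\|>0$. What you do differently is make that continuity a theorem rather than a ``fact'': exploiting that $A$ in~\eqref{eq:def-of-nonlinear-op-A} is quadratic, you write $\nabla A(u)=\nabla A(u_*)+B(u-u_*,\cdot)$ with $B$ a constant bilinear map, obtain the Lipschitz estimate $\|\nabla A(u)-\nabla A(u_*)\|\leq L_0\|u-u_*\|_{L^2}$, and then get explicit constants $C=\|\nabla A(u_*)\|+L_0R$ and $c=\tfrac12\|\nabla A(u_*)\|$ by choosing $L_0R\leq\tfrac12\|\nabla A(u_*)\|$. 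This buys a quantitative modulus of continuity, and, more importantly, it surfaces the real functional-analytic issue that the paper's proof silently passes over: finiteness of $L_0$ hinges on boundedness of the bilinear terms, and while the convolution terms $g\,(K*h)$ are indeed bounded on $L^2\times L^2$ (your Cauchy--Schwarz estimate is correct), the pointwise products coming from $\theta_1\rho_S\rho_V$ and $\theta_2\rho_S\rho_V$ are not. Your primary resolution---working in the discretized setting where the algorithm~\eqref{eq:alg-G-prox-PDHG} actually runs, so that $A$ is a polynomial map on a finite-dimensional space and all norms are equivalent---is sound, and is arguably the setting in which the lemma is genuinely invoked.

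One caveat: your alternative continuum resolution does not work as stated. Membership in the ball $\|u-u_*\|_{L^2}\leq R$ provides no uniform $L^\infty$ control on the densities, so the ``uniform $L^\infty$ bounds on the densities that hold there'' do not in fact hold; to close the argument in the continuum one must \emph{assume} an $L^\infty$ (or sufficiently strong Sobolev) bound as an additional hypothesis---which is, in effect, exactly what the paper's unproved assertion that ``$\nabla A(u)$ is continuous with respect to $u$'' conceals. Keep the discrete justification as the operative one, or state the extra regularity assumption explicitly.
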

\begin{proof}
    This follows immediately from~\eqref{ass:nabla-A-greater-than-0} and the fact that the derivative $\nabla A(u)$ is continuous with respect to $u$.
\end{proof}

\begin{lemma}\label{lemma:M-k-bounded}
    Suppose~\eqref{ass:nabla-A-greater-than-0} holds and let $\tau^{(k)} \sigma^{(k)} <1$. Then there exist constants $0<\theta < \Theta$ such that
    \[
        \theta^2 \|q\|_{L^2}^2 \leq \langle q, M^{(k)} q \rangle \leq \Theta^2 \|q\|_{L^2}^2
    \]
    where
    \[
    \|q\|_{L^2}^2 = \|u\|_{L^2}^2 + \|p\|_{L^2}^2.
    \]
\end{lemma}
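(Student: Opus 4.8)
The plan is to reduce the bilinear form $\langle q, M^{(k)} q\rangle$ to a scalar quadratic form in the two quantities $\|u\|_{L^2}$ and $\|A_{u^{(k)}}^T p\|_{L^2}$, and then to sandwich it between multiples of $\|q\|_{L^2}^2$. Writing $A := A_{u^{(k)}} = \nabla A(u^{(k)})$, $\tau := \tau^{(k)}$, $\sigma := \sigma^{(k)}$ and using the adjoint identity $\langle u, A^T p\rangle_{L^2} = \langle A u, p\rangle_{L^2}$, a direct expansion of the block operator gives
\[
    \langle q, M^{(k)} q\rangle = \tfrac{1}{\tau}\|u\|_{L^2}^2 - 2\langle u, A^T p\rangle_{L^2} + \tfrac{1}{\sigma}\|A^T p\|_{L^2}^2 .
\]
Everything then depends on $p$ only through the single scalar $\|A^T p\|_{L^2}$, so it suffices to control the two-variable form $\tfrac{1}{\tau}a^2 - 2ab + \tfrac{1}{\sigma}b^2$ with $a = \|u\|_{L^2}$, $b = \|A^T p\|_{L^2}$, together with a two-sided comparison of $b$ and $\|p\|_{L^2}$ coming from Lemma~\ref{lemma:nabla-A-bounded}.

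For the upper bound I would estimate the cross term by Cauchy--Schwarz, $|\langle u, A^T p\rangle_{L^2}| \le \|u\|_{L^2}\,\|A^T p\|_{L^2}$, and then use $\|A^T p\|_{L^2} \le \|A\|\,\|p\|_{L^2} \le C\|p\|_{L^2}$ from Lemma~\ref{lemma:nabla-A-bounded}. Combining with Young's inequality $2ab \le a^2 + b^2$ yields $\langle q, M^{(k)} q\rangle \le (\tfrac{1}{\tau}+1)\|u\|_{L^2}^2 + (\tfrac{1}{\sigma}+1)C^2\|p\|_{L^2}^2$, so one may take $\Theta^2 = \max\{\tfrac{1}{\tau}+1,\ (\tfrac{1}{\sigma}+1)C^2\}$. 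Provided the step sizes stay bounded uniformly in $k$, this $\Theta$ is independent of $k$.

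For the lower bound the key is to exploit the hypothesis $\tau\sigma < 1$. Applying Young's inequality with the balancing weight $\lambda = \sqrt{\sigma/\tau}$ (which lies in the admissible interval $(\sigma,\ 1/\tau)$ precisely because $\tau\sigma<1$) gives $2ab \le \lambda a^2 + \lambda^{-1} b^2$, and hence
\[
    \langle q, M^{(k)} q\rangle \ge (1-\sqrt{\tau\sigma})\Bigl(\tfrac{1}{\tau}\|u\|_{L^2}^2 + \tfrac{1}{\sigma}\|A^T p\|_{L^2}^2\Bigr),
\]
where $1-\sqrt{\tau\sigma} > 0$. It then remains to replace $\|A^T p\|_{L^2}^2$ by a multiple of $\|p\|_{L^2}^2$, i.e. to invoke a lower bound $\|A^T p\|_{L^2} \ge c\|p\|_{L^2}$, after which $\theta^2 = (1-\sqrt{\tau\sigma})\min\{\tfrac{1}{\tau},\ \tfrac{c^2}{\sigma}\}$ works.

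I expect this last step to be the main obstacle. The coercivity of the dual block, $\|A^T p\|_{L^2} \ge c\|p\|_{L^2}$, is exactly the statement that $A = \nabla A(u^{(k)})$ is bounded below, equivalently that the linearized constraint is surjective; without it the $(2,2)$ block $\tfrac{1}{\sigma}AA^T$ degenerates on $\ker A^T$ and no lower bound of the form $\theta^2\|q\|_{L^2}^2$ can hold. This is precisely the property the G-prox preconditioning norm $\|p\|_{\mathcal{H}^{(k)}}^2 = \|A_{u^{(k)}}^T p\|_{L^2}^2$ is built to exploit, and it is the quantitative content one must extract from Lemma~\ref{lemma:nabla-A-bounded}: the constant $c$ there has to be read as a uniform lower bound on the smallest singular value of $\nabla A(u)$ over $\|u-u_*\|_{L^2}\le R$, not merely on its operator norm. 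Granting this, the remaining bookkeeping — that $\theta,\Theta$ are genuinely uniform in $k$ — follows as long as the iterates $u^{(k)}$ remain in the ball of radius $R$ where Lemma~\ref{lemma:nabla-A-bounded} applies and the step sizes $\tau^{(k)},\sigma^{(k)}$ stay bounded away from $0$ and from $\infty$.
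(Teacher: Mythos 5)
Your proof follows essentially the same route as the paper's: the same expansion $\langle q, M^{(k)} q\rangle = \frac{1}{\tau^{(k)}}\|u\|_{L^2}^2 + \frac{1}{\sigma^{(k)}}\|A_{u^{(k)}}^T p\|_{L^2}^2 - 2\langle u, A_{u^{(k)}}^T p\rangle_{L^2}$, the same Cauchy--Schwarz/Young upper bound giving $\Theta^2 = \max\{\frac{1}{\tau}+1,\ (\frac{1}{\sigma}+1)C^2\}$, and the same weighted Young's inequality for the lower bound (your factor $1-\sqrt{\tau\sigma}$ is exactly the paper's $\epsilon$ defined by $\tau^{(k)}\sigma^{(k)}=(1-\epsilon)^2$). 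The ``main obstacle'' you flag --- that concluding $\|A_{u^{(k)}}^T p\|_{L^2} \ge c\|p\|_{L^2}$ requires reading the constant $c$ of Lemma~\ref{lemma:nabla-A-bounded} as a lower bound on the smallest singular value of $\nabla A(u)$ (coercivity of $A_{u^{(k)}}^T$), not merely on its operator norm --- is a genuine subtlety that the paper's proof glosses over by citing Lemma~\ref{lemma:nabla-A-bounded} directly at that step, so your reading is the one needed for the argument to close.
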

A proof of Lemma~\ref{lemma:M-k-bounded} is provided in the appendix.

With the above Lemmas, we can use the Theorem~2.11 from~\cite{clason2017primal} to show the local convergence of the algorithm.
\begin{theorem}\label{thm:local-convergence}
    Let $(u_*, p_*) \in L^2 \times \mathcal{H}^{(*)}$ be a solution to~\eqref{eq:linearized-KKT} where $\|p\|^2_{\mathcal{H}^{(*)}} = \|A_{u_*}^T p\|^2_{L^2}$. Let the step sizes $\tau^{(k)}$ and $\sigma^{(k)}$ satisfy $\tau^{(k)}\sigma^{(k)}<1$ for all $k$. Then there exists $\delta>0$ such that for any initial point $(u^{(0)},p^{(0)})\in L^2 \times \mathcal{H}^{(0)}$ satisfying
    \[
        \|u^{(0)} - u_*\|_{L^2}^2 + \|p^{(0)} - p_*\|_{L^2}^2 < \delta^2,
    \]
    the iterates $(u^{(k)}, p^{(k)})$ from~\eqref{eq:alg-G-prox-PDHG} converges to the saddle point $(u_*, p_*)$.
\end{theorem}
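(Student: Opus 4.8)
The plan is to follow the template of Theorem~2.11 in~\cite{clason2017primal}, but to carry the preconditioning metric $M^{(k)}$ through the estimates rather than a constant one. First I would record that one step of~\eqref{eq:alg-G-prox-PDHG} is exactly the implicit inclusion $0 \in H_{u^{(k)}}(q^{(k+1)}) + M^{(k)}(q^{(k+1)} - q^{(k)})$ derived above, with $q=(u,p)$, while the saddle point obeys $0 \in H_{u_*}(q_*)$. Because $\partial g$ and $\partial f^*$ are subdifferentials of convex lower semicontinuous functionals, they are maximal monotone, so the only obstruction to monotonicity of $H_{\bar u}$ comes from the linearized constraint blocks $A_{\bar u}^T p$ and $-A(\bar u) - A_{\bar u}(u-\bar u)$, evaluated at the moving base point $\bar u = u^{(k)}$ rather than at $u_*$. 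The whole argument will be a Fej\'er-type monotonicity of $\|q^{(k)} - q_*\|^2_{M^{(k)}}$ confined to the ball $B_R(q_*)$ (which in particular controls the component $\|u^{(k)} - u_*\|_{L^2}\le R$), on which Lemmas~\ref{lemma:nabla-A-bounded} and~\ref{lemma:M-k-bounded} furnish, respectively, the two-sided bound $c \le \|\nabla A(u)\| \le C$ and the norm-equivalence $\theta^2\|q\|_{L^2}^2 \le \langle q, M^{(k)} q\rangle \le \Theta^2\|q\|_{L^2}^2$.

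The heart of the proof is the cross-term estimate. Testing the inclusion for $q^{(k+1)}$ against $q^{(k+1)} - q_*$ and subtracting the saddle-point inclusion, the monotone parts contribute nonnegative terms and may be dropped, leaving a residual built from $(A_{u^{(k)}}^T - A_{u_*}^T)p_*$ and from the linearization remainder $A(u^{(k)}) + A_{u^{(k)}}(u_* - u^{(k)}) - A(u_*)$. Using that $u \mapsto \nabla A(u)$ is continuous near $u_*$, both pieces are controlled by $L(R)\,\|q^{(k)} - q_*\|_{L^2}\,\|q^{(k+1)} - q_*\|_{L^2}$ with a constant $L(R)$ that tends to $0$ as $R \to 0$ (the remainder is a first-order Taylor error, and the operator difference is bounded by the modulus of continuity of $\nabla A$). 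This is the step I expect to be the main obstacle, since it is where the nonlinearity of $A$ and the movement of the base point must both be absorbed, and where one must verify that the error constant is genuinely uniform in $k$.

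Next I would invoke the three-point identity for the self-adjoint, positive-definite-on-$B_R$ metric $M^{(k)}$,
\[
\langle M^{(k)}(q^{(k+1)} - q^{(k)}),\, q^{(k+1)} - q_*\rangle = \frac{1}{2}\Big(\|q^{(k+1)} - q_*\|^2_{M^{(k)}} - \|q^{(k)} - q_*\|^2_{M^{(k)}} + \|q^{(k+1)} - q^{(k)}\|^2_{M^{(k)}}\Big),
\]
and combine it with the cross-term bound. The positive increment term $\|q^{(k+1)} - q^{(k)}\|^2_{M^{(k)}}$, which is bounded below by $\theta^2\|q^{(k+1)} - q^{(k)}\|_{L^2}^2$, absorbs the error whenever $L(R)$ is small enough, yielding the contraction $\|q^{(k+1)} - q_*\|^2_{M^{(k)}} \le \|q^{(k)} - q_*\|^2_{M^{(k)}}$ together with a summable gain $\sum_k \|q^{(k+1)} - q^{(k)}\|_{L^2}^2 < \infty$. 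Converting between $\|\cdot\|_{M^{(k)}}$ and $\|\cdot\|_{L^2}$ through Lemma~\ref{lemma:M-k-bounded} at consecutive indices costs only the fixed factor $\Theta^2/\theta^2$, so choosing $\delta$ with $(\Theta/\theta)\,\delta < R$ guarantees by induction that once $\|q^{(0)} - q_*\|_{L^2} < \delta$ the entire trajectory stays in $B_R(q_*)$, keeping every estimate valid.

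Finally, the summability forces $\|q^{(k+1)} - q^{(k)}\|_{L^2} \to 0$; the Fej\'er monotonicity gives a bounded sequence with a convergent subsequence, and passing to the limit in the inclusion---using continuity of $A$ and $\nabla A$ together with the closedness of the graphs of the maximal monotone operators $\partial g$ and $\partial f^*$---shows any limit point solves $0 \in H_{u_*}(\cdot)$. Since the monotone residual is strict away from $q_*$ (here the $\lambda$-strong convexity of $G$, hence of $g$, is what rules out spurious limits), the full sequence converges to $(u_*, p_*)$, which is the claim.
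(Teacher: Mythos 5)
Your proposal does not follow the paper's route: the paper's entire proof consists of verifying the hypotheses of \cite[Theorem~2.11]{clason2017primal} --- namely Lemma~\ref{lemma:nabla-A-bounded}, Lemma~\ref{lemma:M-k-bounded}, the $\lambda$-strong convexity of $G$, and (in the subsequent remark) metric regularity of $H_{u_*}$ --- and then citing that theorem. Re-deriving the convergence argument instead is legitimate in principle, but your reconstruction has a genuine gap exactly at the step you flag as the heart of the matter. The residual does split into the Taylor remainder $A(u^{(k)}) + A_{u^{(k)}}(u_*-u^{(k)}) - A(u_*)$ and the operator-difference term $(A_{u^{(k)}}^T - A_{u_*}^T)p_*$, and the first is indeed $O(\|u^{(k)}-u_*\|_{L^2}^2)$, so its coefficient vanishes as $R\to 0$. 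The second does not: with $\nabla A$ locally Lipschitz with constant $L$, the sharpest bound is $L\,\|p_*\|_{L^2}\,\|u^{(k)}-u_*\|_{L^2}\,\|u^{(k+1)}-u_*\|_{L^2}$, whose coefficient $L\|p_*\|_{L^2}$ is a fixed number independent of $R$ (mere continuity of $\nabla A$ gives something even weaker, an error that is only first order in the distance). So the claim that both pieces carry a constant $L(R)\to 0$ fails whenever $p_*\neq 0$. This is fatal to your absorption step: a cross term in the \emph{distances} $\|q^{(k)}-q_*\|\,\|q^{(k+1)}-q_*\|$ cannot be absorbed by the \emph{increment} $\|q^{(k+1)}-q^{(k)}\|^2_{M^{(k)}}$; Young's inequality converts it into terms proportional to $\|q^{(k)}-q_*\|^2$ and $\|q^{(k+1)}-q_*\|^2$, yielding at best a quasi-Fej\'er inequality with a multiplicative factor strictly larger than $1$ that compounds over iterations and lets the trajectory escape $B_R(q_*)$. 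The mechanism that actually kills this term is the $\lambda$-strong convexity of $G$: strong monotonicity of $\partial g$ contributes $-\lambda\|u^{(k+1)}-u_*\|_{L^2}^2$ to the descent estimate, and it is this negative term (for $R$ small, with $\lambda$ dominating the constants built from $L\|p_*\|_{L^2}$) that absorbs the $p_*$-weighted residual; the dual variable itself has no contraction mechanism, so all error terms must be reduced to $u$-distances and beaten by this gain. In your sketch, strong convexity appears only at the very end, to rule out spurious limit points. Placed there, your argument would ``prove'' local convergence of the scheme without strong convexity of $G$ at all, which is false in general and is precisely why both the statement of Theorem~\ref{thm:local-convergence} and \cite{clason2017primal} hypothesize it.

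Two further problems, smaller but real. First, your induction keeping the iterates in $B_R(q_*)$ converts between $\|\cdot\|_{M^{(k)}}$ and $\|\cdot\|_{M^{(k+1)}}$ through the $L^2$ norm, and this costs the factor $\Theta^2/\theta^2>1$ at \emph{every} iteration, not once; these factors compound geometrically, so choosing $(\Theta/\theta)\,\delta<R$ does not close the induction. One must either show the per-step metric drift (controlled by $\|u^{(k+1)}-u^{(k)}\|$ via Lipschitz continuity of $\nabla A$) is dominated by the per-step gain, or compare all metrics $M^{(k)}$ to the fixed metric at $u_*$ with errors of size $O(R)$ folded into the same strong-convexity absorption. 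Second, in this infinite-dimensional $L^2$ setting, Fej\'er boundedness yields only weakly convergent subsequences, so the limit-identification step (passing to the limit in the nonlinear operator and the subdifferential graphs) needs more than the stated graph-closedness; the cited theorem handles this, and the paper additionally records that $H_{u_*}$ must be metrically regular, a hypothesis your reconstruction never confronts.
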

\begin{proof}
    By Lemma~\ref{lemma:nabla-A-bounded}, Lemma~\ref{lemma:M-k-bounded}, and strong convexity of the functional $G$ from~\eqref{eq:var_vacc}, we can use~\cite[Theorem~2.11]{clason2017primal}, which proves the theorem.
\end{proof}

\begin{remark}
    \cite[Theorem~2.11]{clason2017primal} requires $H_{u_*}$ to satisfy the condition called metric regularity. In our formulation, the constraint $A(u) = 0$ makes $H_{u_*}$ metrically regular by \cite[Section~5.3]{clason2017stability}. We refer readers to~\cite{clason2017primal, clason2017stability, rockafellar2009variational} for further details about metric regularity.
\end{remark}

\subsection{Implementation of the algorithm}

To implement the algorithm to the minimization problem \eqref{eq:var_vacc}, we set
\begin{equation*}
    \begin{split}
        u &= ((\rho_i, m_i)_{i\in\mathbb{S}},f)\\
        p &= (\phi_i)_{i\in\mathbb{S}}\\
        g(u) &= {G} (u)\\    
        f(A(u)) &= 
    \begin{cases}
     0 & \text{if } A(u) = 0\\
     \infty & \text{otherwise}
    \end{cases}\\
    f^*(p) & = 0.
    \end{split}
\end{equation*}
We use~\eqref{eq:def-of-nonlinear-op-A} for the definition of the operator $A$.
Define the Lagrangian functional as
\[
    \mathcal{L}(u,p) := {G}(u) - \langle A(u), p \rangle_{L^2}
\]
where $\langle \cdot, \cdot \rangle_{L^2}$ is defined in~\eqref{eq:definition-of-L2-vector-inner}. We summarize the algorithm as follows. 


\begin{algorithm}[H]
\caption{G-prox PDHG for mean-field control SIRV system} \label{alg:gproxPDHG}
    \medskip
    
    \textbf{Input}: $\rho_i(0,\cdot)$ ($i\in \mathbb{S}$)
    
    \textbf{Output}: $\rho_i, m_i, \phi_i$ ($i\in \mathbb{S}$), $f$


    \rule{\linewidth}{0.5pt}

        \textbf{While} relative error $>$ tolerance \textbf{For} $i\in\mathbb{S}$
    
            \begin{align*}
                \rho_i^{(k+1)} &= \argmin_\rho \mathcal{L}((\rho, m^{(k)},f^{(k)}),\phi^{(k)}) + \frac{1}{2\tau} \|\rho - \rho_i^{(k)}\|^2_{L^2}\\
                m_i^{(k+1)} &= \argmin_m \mathcal{L}((\rho^{(k+1)}, m,f^{(k)}),\phi_i^{(k)}) + \frac{1}{2\tau} \|m - m_i^{(k)}\|^2_{L^2}\\
                f^{(k+1)} &= \argmin_f \mathcal{L}((\rho^{(k+1)},m^{(k+1)},f),\phi^{(k)}) + \frac{1}{2\tau} \|f - f^{(k)}\|^2_{L^2}\\
                \phi_i^{(k+ 1)} &= \argmax_\phi \mathcal{L}((2\rho^{(k+1)} - \rho^{(k)}, 2m^{(k+1)} - m^{(k)},2f^{(k+1)} -f^{(k)}),\phi)\\
                & \hspace{1.8cm} - \frac{1}{2\sigma} \|\phi - \phi_i^{(k)}\|^2_{H^{(k)}_i}
            \end{align*}
\end{algorithm}

\noindent Here, $L^2$ and $H^{(k)}_i$ norms are defined as
\begin{align*}
        \|u\|^2_{L^2} &= (u,u)_{L^2}= \int^T_0 \int_\Omega u^2 dx\, dt,\quad \|p\|^2_{H^{(k)}_i} = \|[\nabla A_i(u^{(k)})]^T p\|_{L^2}^2,\quad i \in \mathbb{S}
\end{align*}
for any $u : [0,T] \times \Omega \rightarrow [0,\infty)$. Moreover, the relative error is defined as
\[
\text{relative error} = \frac{|{G}(\rho_i^{(k+1)},m_i^{(k+1)})- {G}(\rho_i^{(k)},m_i^{(k)})|}{|{G}(\rho_i^{(k)},m_i^{(k)})|}.
\]
In the section~\ref{sec:experiments}, We use quadratic functions for $\mathcal{E}_i$ $(i\in\{S,I,V\})$, $\mathcal{G}_P$,   $\mathcal{G}_V$, $\mathcal{G}_0$. With the definitions~\eqref{eq:constraint-V-0}, we use
\[
    \begin{aligned}
        \mathcal{E}_i(\rho_i(T,\cdot)) &= \int_\Omega \frac{a_i}{2} \rho_i(T,x)^2 \, dx, \quad i=S,I,V\\
        \mathcal{G}_P(\rho(t,\cdot)) &= \int_\Omega \frac{d_P}{2} \rho(t,x)^2 \, dx\\
        \mathcal{G}_V(\rho(t,\cdot))& = \int_\Omega \frac{d_V}{2} \rho(t,x)^2\, dx + i_{[-\infty , C_{factory}]}(\rho(t,\cdot))\\
        \mathcal{G}_0(f(t,\cdot)) &= \int_\Omega \frac{d_0}{2} f(t,x)^2 + i_{\Omega_{factory}}(x)f(t,x) \, dx + i_{[-\infty , f_{max}]}(f(t,\cdot))
    \end{aligned}
\]
Thus, we can write the cost functional as follows
\begin{equation}\label{eq:var_vacc-quadratic}
    \begin{split}
        {G}((\rho_i,m_i)_{i\in\mathbb{S}},f) &=
         \int_\Omega \sum_{i=S,I,V}\frac{a_i}{2}\rho_i(T,\cdot)^2 \, dx \\
        & + \int^T_0  \int_\Omega \sum_{i = S,I,R} F_i(\rho_i, m_i) \,dx\, dt +\int^T_{T'} \int_\Omega F_V(\rho_V,m_V)\,dx\, dt\\
        & + \int^T_0 \int_\Omega \frac{d_P}{2} (\rho_S+\rho_I+\rho_R)^2 + \frac{d_V}{2} \rho_V^2 \,dx\,dt \\
        & + \int^{T'}_0 \int_\Omega \frac{d_0}{2} f^2 + i_{\Omega_{factory}} f\, dx \, dt\\
        & + \int^T_0 i_{[-\infty, C_{factory}]}(\rho_V(t,\cdot)) + i_{[-\infty, f_{max}]}(f(t,\cdot))\, dt\\
        & + \frac{\lambda}{2}\int^T_0\int_\Omega f^2 + \sum_{i\in\mathbb{S}} \rho_i^2 + |m_i|^2 \,dx\,dt.
    \end{split}
\end{equation}
where $a_i$, $d_P$, $d_V$, $d_0$ are nonnegative constants. With this cost functional, we find explicit formula for each variable $\rho_i^{(k+1)},m_i^{(k+1)},\phi_i^{(k+1)}$ $(i\in\mathbb{S}), f^{(k+1)}$.

\begin{proposition}\label{prop:explicit-formula}
The variables $\rho_i^{(k+1)},m_i^{(k+1)},\phi_i^{(k+1)}$ ($i\in\mathbb{S}$), and $f^{(k+1)}$ from the Algorithm~\ref{alg:gproxPDHG} satisfy the following explicit formulas:
\begin{align*}
    \rho_S^{(k+1)} &= root_+\Biggl(\frac{\tau}{1+ \tau (d_P + \lambda) } \biggl(\partial_t\phi_S^{(k)} + \frac{\eta_S^2}{2} \Delta\phi_S^{(k)} - \frac{1}{\tau} \rho_S^{(k)} + \beta\left( \phi_I^{(k)} - \phi_S^{(k)} \right) K*\rho_I^{(k)} \\
    & \hspace{2cm} + \rho_V^{(k)} \left( \theta_1(\phi_R^{(k)} - \phi_S^{(k)}) - \theta_2 \phi_V^{(k)} \right) + d_P (\rho_I^{(k)} + \rho_R^{(k)})\biggl), 0, - \frac{\tau \alpha_S |m_S^{(k)}|^2}{2(1+\tau (d_P + \lambda))}
    \Biggl)\\
    \rho_I^{(k+1)} &= root_+\Biggl(\frac{\tau}{1+ \tau (d_P + \lambda) } \biggl(\partial_t\phi_I^{(k)} + \frac{\eta_I^2}{2} \Delta\phi_I^{(k)} - \frac{1}{\tau} \rho_I^{(k)} + \beta K * \left( \rho_S^{(k)}(\phi_I^{(k)} - \phi_S^{(k)}) \right)\\
    & \hspace{4.5cm}  + \gamma (\phi_R^{(k)} - \phi_I^{(k)}) + d_P (\rho_S^{(k)} + \rho_R^{(k)})\biggl), 0, - \frac{\tau \alpha_I |m_I^{(k)}|^2}{2(1+\tau (d_P + \lambda))}
    \Biggl)
    \Biggl)\\
    \rho_R^{(k+1)} &= root_+\Biggl(\frac{\tau}{1+ \tau (d_P + \lambda) } \biggl(\partial_t\phi_R^{(k)} + \frac{\eta_R^2}{2} \Delta\phi_R^{(k)} - \frac{1}{\tau} \rho_R^{(k)} + d_P (\rho_S^{(k)} + \rho_I^{(k)})\biggl), 0, - \frac{\tau \alpha_R |m_R^{(k)}|^2}{2(1+\tau (d_P + \lambda))}
    \Biggl)\\
    \rho_V^{(k+1)} &= \min\left( C_{factory}, \frac{\tau}{1 + \tau (d_V+\lambda)} \Bigl( - \partial_t \phi_V^{(k)} - \rho_S^{(k)} (\theta_1(\phi_R^{(k)}-\phi_S^{(k)})-\theta_2\phi_V^{(k)}) + \frac{1}{\tau} \rho_V^{(k)} \Bigr)\right),\\
    & \hspace{11.2cm} (t,x)\in[0,T']\times\Omega\\
    \rho_V^{(k+1)} &= root_+\Biggl(\frac{\tau}{1+ \tau (d_V+\lambda) } \biggl(\partial_t\phi_V^{(k)}   + \rho_S(\theta_1(\phi_R-\phi_S)-\theta_2\phi_V) - \frac{1}{\tau} \rho_V^{(k)}  \biggl), 0, - \frac{\tau \alpha_V |m_V^{(k)}|^2}{2(1+\tau (d_V+\lambda))}
    \Biggl),\\
    & \hspace{11.2cm} (t,x)\in(T',T]\times\Omega\\
   m_i^{(k+1)} &= \frac{\rho_i^{(k+1)}}{\tau\alpha_i + (1+\tau \lambda) \rho_i^{(k+1)}} \left( m_i^{(k)} - \tau \nabla \phi_i^{(k)} \right),\indent (i\in \mathbb{S})\\
   f^{(k+1)} &= \min\left(f_{max}, \frac{\tau}{1 + \tau (d_0+\lambda)} \left( \frac{1}{\tau} f^{(k)} - \phi_V^{(k)} \right) \right)\mathcal{X}_{\Omega_{factory}}(x) \\
   \phi_S^{(k+\frac 1 2)} &= \phi_S^{(k)} + \sigma (A_S A_S^T)^{-1} \Bigl( -\partial_t \rho^{(k+1)}_S - \nabla \cdot m^{(k+1)}_S - \beta \rho^{(k+1)}_S K * \rho^{(k+1)}_I - \theta_1 \rho_S^{(k+1)} \rho_V^{(k+1)}  + \frac{\eta_S^2}{2} \Delta \rho^{(k+1)}_S \Bigr)\\
    \phi_I^{(k+\frac 1 2)} &= \phi_I^{(k)} + \sigma (A_I A_I^T)^{-1} \Bigl( -\partial_t \rho^{(k+1)}_I - \nabla \cdot m^{(k+1)}_I + \beta  \rho^{(k+1)}_S K * \rho^{(k+1)}_I - \gamma \rho^{(k+1)}_I  + \frac{\eta_I^2}{2} \Delta \rho^{(k+1)}_I \Bigr)&\\
    \phi_R^{(k+\frac 1 2)} &= \phi_R^{(k)} + \sigma (A_R A_R^T)^{-1} \Bigl( -\partial_t \rho^{(k+1)}_R - \nabla \cdot m^{(k+1)}_R +  \gamma \rho^{(k+1)}_I + \theta_1 \rho_S^{(k+1)} \rho_V^{(k+1)} + \frac{\eta_R^2}{2} \Delta \rho^{(k+1)}_R\Bigr)\\
    \phi_V^{(k+\frac 1 2)} &= \phi_V^{(k)} + \sigma (A_V A_V^T)^{-1} \left( - \partial_t \rho_V^{(k+1)} + f^{(k+1)} \mathcal{X}_{[0,T')}(t) - \nabla \cdot m_V^{(k+1)} \mathcal{X}_{[T',T]}(t) - \theta_1 \rho_S^{(k+1)} \rho_V^{(k+1)} \right)\\
\end{align*}
where $root_+(a,b,c)$ is a positive root of a cubic polynomial $x^3 + a x^2 + b x +c = 0$ and we approximate the $A_iA_i^*$ as follows
    \begin{align*}
        A_S A_S^{T} &= -\partial_{tt} + \frac{\eta_S^4}{4} \Delta^2 - (1 + (\beta+\theta_1) \eta_S^2) \Delta + (\beta+\theta_1)^2\\
        A_I A_I^{T} &= -\partial_{tt} + \frac{\eta_I^4}{4} \Delta^2 - (1 + (\gamma+\beta) \eta_I^2) \Delta + (\gamma + \beta)^2\\
        A_R A_R^{T} &= -\partial_{tt} + \frac{\eta_R^4}{4} \Delta^2 - \Delta\\
        A_V A_V^{T} &= -\partial_{tt} - \Delta + \theta_2^2.
    \end{align*}
\end{proposition}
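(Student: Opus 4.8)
The plan is to read Algorithm~\ref{alg:gproxPDHG} as a block (Gauss--Seidel) proximal scheme and to derive each displayed identity as the closed-form solution of the corresponding single-block subproblem. Every update is the argmin (for the primal blocks $\rho_i$, $m_i$, $f$) or argmax (for the dual blocks $\phi_i$) of the Lagrangian $\mathcal{L}(u,p)=G(u)-\langle A(u),p\rangle_{L^2}$ plus or minus a quadratic proximal penalty, with all other blocks frozen at their most recent iterates. First I would write, for each block, the first-order optimality condition: for the smooth blocks this is the vanishing of the Gâteaux derivative, while for the blocks carrying indicator constraints (namely $\rho_V$ on $[0,T']$ and $f$, which involve $i_{[-\infty,C_{factory}]}$, $i_{[-\infty,f_{max}]}$ and $i_{\Omega_{factory}}$) it is the inclusion $0\in\partial(\cdot)$, whose solution is the unconstrained minimizer followed by a projection.

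The computation of these derivatives is the technical core. For the coupling term $-\langle A(u),p\rangle_{L^2}$ I would integrate by parts once in $t$ and twice in $x$, using the Neumann boundary conditions to discard all boundary contributions; this turns $-\phi_i\,\partial_t\rho_i$ into $+\partial_t\phi_i$ and $+\frac{\eta_i^2}{2}\phi_i\Delta\rho_i$ into $+\frac{\eta_i^2}{2}\Delta\phi_i$. The convolution terms require care: since $K$ is symmetric, differentiating $\int\!\!\int \phi_S\,\beta\rho_S\,(K*\rho_I)$ with respect to $\rho_S$ (which sits outside the convolution) produces $\beta\phi_S\,K*\rho_I$, whereas differentiating with respect to $\rho_I$ (inside the convolution) produces $\beta\,K*(\phi_S\rho_S)$. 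This is exactly what generates the asymmetry between the factor $K*\rho_I$ in the $\rho_S$-formula and the factor $K*(\rho_S(\phi_I-\phi_S))$ in the $\rho_I$-formula. Collecting all contributions reproduces precisely the $\phi$-coupling structure appearing in the corresponding equations of Proposition~\ref{proposition:KKT-conditions}, together with the congestion term $\frac{\delta\mathcal{G}_P}{\delta\rho}=d_P(\rho_S+\rho_I+\rho_R)$ whose diagonal part $d_P\rho_i$ merges with $\frac{\lambda}{2}\rho_i^2$ and the proximal term into the denominator $1+\tau(d_P+\lambda)$.

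Once the derivatives are assembled, solving each subproblem is mostly algebra. For $m_i$ the objective is quadratic in $m_i$, the optimality condition is linear, and inverting it yields the stated rational expression $m_i^{(k+1)}=\frac{\rho_i^{(k+1)}}{\tau\alpha_i+(1+\tau\lambda)\rho_i^{(k+1)}}(m_i^{(k)}-\tau\nabla\phi_i^{(k)})$. For the density blocks $\rho_i$ ($i=S,I,R$) and for $\rho_V$ on $(T',T]$, the kinetic term $F_i=\frac{\alpha_i|m_i|^2}{2\rho_i}$ contributes $-\frac{\alpha_i|m_i|^2}{2\rho_i^2}$ to the optimality condition; note that here the block scheme retains this raw kinetic contribution rather than the reduced $-\frac{\alpha_i}{2}|\nabla\phi_i|^2$ that appears in Proposition~\ref{proposition:KKT-conditions} after eliminating $m_i$, and it is this retention that makes the equation algebraic. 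Multiplying through by $\rho_i^2$ yields a cubic $\rho_i^3+a\rho_i^2+c=0$ with vanishing linear coefficient and $c=-\frac{\tau\alpha_i|m_i|^2}{2(1+\tau(d+\lambda))}\le 0$, so that $p(0)=c\le 0$ and $p(\rho)\to+\infty$ force a nonnegative real root, the one selected by $root_+$. For $\rho_V$ on $[0,T']$ and for $f$ there is no momentum variable, the subproblem is quadratic, and the explicit minimizer is then clipped by $\min(C_{factory},\cdot)$ (respectively $\min(f_{max},\cdot)$) and multiplied by $\mathcal{X}_{\Omega_{factory}}$, which is exactly the proximal map of the indicator functionals.

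Finally, for the dual updates I would use the norm defined by $\|\phi\|^2_{H^{(k)}_i}=\|A_i^T\phi\|_{L^2}^2$, where $A_i$ denotes the relevant block of the linearization $\nabla A(u^{(k)})$: the gradient of $\frac{1}{2\sigma}\|\phi-\phi_i^{(k)}\|^2_{H^{(k)}_i}$ equals $\frac{1}{\sigma}A_iA_i^T(\phi-\phi_i^{(k)})$, so the optimality condition inverts to the preconditioned ascent step $\phi_i^{(k+1)}=\phi_i^{(k)}+\sigma(A_iA_i^T)^{-1}A_i(\tilde u^{(k+1)})$. The last ingredient is the constant-coefficient approximation of $A_iA_i^T$: composing $A_i$ with its adjoint and replacing the variable coefficients (such as $\beta K*\rho_I$ and $\theta_1\rho_V$) by the constants $\beta$, $\theta_1$, etc., yields the displayed fourth-order constant-coefficient operators, which are the ones actually inverted (e.g.\ by FFT). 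The main obstacle I anticipate is not conceptual but the bookkeeping in the second and third paragraphs, namely keeping every integration-by-parts sign, every convolution placement, and every frozen-versus-updated index consistent across the four coupled equations; the cubic solve, the momentum inversion, and the projection steps are then routine.
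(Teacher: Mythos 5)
Your proposal is correct and is essentially the derivation the paper intends: the paper states Proposition~\ref{prop:explicit-formula} without a written proof, and the formulas indeed follow exactly as you describe, by writing the first-order optimality condition of each proximal subproblem in Algorithm~\ref{alg:gproxPDHG} (integration by parts and the symmetry of $K$ giving the $K*\rho_I$ versus $K*(\rho_S(\phi_I-\phi_S))$ asymmetry, the cubic with vanishing linear coefficient for the density blocks, projection for the indicator-constrained blocks, and the preconditioned dual step with the constant-coefficient approximation of $A_iA_i^T$), mirroring the variational computations in the paper's proof of Proposition~\ref{proposition:KKT-conditions}. The only nit is a sign-convention slip in your dual step ($\phi_i^{(k)}+\sigma(A_iA_i^T)^{-1}A_i(\tilde u)$ should be $\phi_i^{(k)}-\sigma(A_iA_i^T)^{-1}A_i(\tilde u)$ under the paper's Lagrangian $\mathcal{L}=G-\langle A(u),p\rangle$), which is immaterial since the paper's displayed formulas carry the negated operator inside the parentheses.
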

We use FFTW library to compute $(A_i A_i^T)^{-1}$ ($i\in \mathbb{S}$) and convolution terms by Fast Fourier Transform (FFT), which is $O(n\log n)$ operations per iteration with $n$ being the number of points. Thus, the algorithm takes just $O(n\log n)$ operations per iteration.

\section{Experiments}\label{sec:experiments}
In this section, we present several sets of numerical experiments using the Algorithm~\ref{alg:gproxPDHG} with various parameters. We wrote C++ codes to run the numerical experiments. Let $\Omega = [0,1]^2$ be a unit square in $\mathbb{R}^2$ and the terminal time $T=1$. The domain $[0,1]\times\Omega$ is discretized with the regular Cartesian grid below.
\[\Delta x_1 = \frac{1}{N_{x_1}},\quad \Delta x_2 = \frac{1}{N_{x_2}},\quad \Delta t = \frac{1}{N_t-1}\]
\begin{equation*}
    \begin{aligned}
        x_{kl} &= \left( (k+0.5) \Delta x_1, (l+0.5)\Delta x_2 \right), && k = 0,\cdots,N_{x_1}-1,\quad l = 0,\cdots,N_{x_2}-1\\
        t_n &= n \Delta t , && n = 0,\cdots,N_t-1
    \end{aligned}
\end{equation*}
where $N_{x_1}$, $N_{x_2}$ are the number of discretized points in space and $N_t$ is the number of discretized points in time.
For all the experiments, we use the same set of parameters,
\begin{gather*}
    \alpha_S = 10, \quad \alpha_I = 30, \quad \alpha_R = 20, \quad \alpha_V = 0.005\\
    a_S = 2, \quad a_I = 2, \quad a_R = 0.001, \quad a_V = 0.1\\
    T'=0.5,\quad\sigma=0.01,\quad d_P = 0.4,\quad d_V = 0.4, \quad d_0 = 0.01\\
    \theta_2=0.9 \quad \eta_i = 0.01 \quad(i\in \mathbb{S}).
\end{gather*}
By setting a higher value for $\alpha_I$, we penalize the infected population's movement more than other populations. Considering the immobility of the infected individuals, this is a reasonable choice in terms of real-world applications. By setting $T'=1/2$, the solution will produce the vaccines during $0\leq t < 1/2$ and deliver them during $1/2 \leq t \leq 1$. Furthermore, we fix the parameters for the infection rate and recovery rate 
\[
    \beta = 0.8,\quad \gamma=0.1.
\]
The paper~\cite{lee2020controlling} describes how the parameters $\beta$ and $\gamma$ affect the propagation of the populations. In this paper, we focus on the vaccine productions and distributions. Recall that from the formulation~\eqref{eq:var_vacc-quadratic}, we have terminal functionals
\[\mathcal{E}_i(\rho_i(T,\cdot)) = \int_\Omega \frac{a_i}{2}\rho_i(T,x)^2\, dx,\quad i\in\{S,I,V\}. \]
Thus, the solution to the problem has to minimize the total number of susceptible, infected, and vaccines at the terminal time $T$. The solution reduces the total number of infected by recovering them with a rate $\gamma$ and decreases the total number of susceptible by transforming the susceptible to the infected with a rate $\beta$ or to the recovered with a rate $\theta_1$ (Figure~\ref{fig:SIRflow}). If the $\beta$ is large and $\gamma$ is small, the number of infected will grow since there are more inflows from susceptible than the outflows to the recovered. To minimize the total number of the infected, the solution has to vaccinate the susceptible as much as possible to avoid the susceptible becoming infected. Thus, the vaccines need to be produced and delivered to the susceptible efficiently while satisfying the constraint conditions~\eqref{eq:constraint2}.
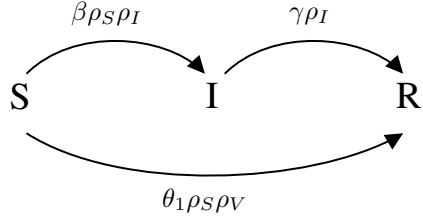
\begin{figure}
    \centering
    \tikzset{every picture/.style={line width=0.75pt}} 
\begin{tikzpicture}[x=0.75pt,y=0.75pt,yscale=-1,xscale=1]

\draw    (80,70) .. controls (103.66,46.32) and (149.19,50.15) .. (168.05,68) ;
\draw [shift={(170,70)}, rotate = 228.18] [fill={rgb, 255:red, 0; green, 0; blue, 0 }  ][line width=0.08]  [draw opacity=0] (8.93,-4.29) -- (0,0) -- (8.93,4.29) -- cycle    ;
\draw    (80,100) .. controls (126.81,130.01) and (222.59,126.58) .. (267.96,101.18) ;
\draw [shift={(270,100)}, rotate = 509.25] [fill={rgb, 255:red, 0; green, 0; blue, 0 }  ][line width=0.08]  [draw opacity=0] (8.93,-4.29) -- (0,0) -- (8.93,4.29) -- cycle    ;
\draw    (180,70) .. controls (203.66,46.32) and (249.19,50.15) .. (268.05,68) ;
\draw [shift={(270,70)}, rotate = 228.18] [fill={rgb, 255:red, 0; green, 0; blue, 0 }  ][line width=0.08]  [draw opacity=0] (8.93,-4.29) -- (0,0) -- (8.93,4.29) -- cycle    ;

\draw (70,72.93) node [anchor=north west][inner sep=0.75pt]  [font=\Large] [align=left] {{\fontfamily{ptm}\selectfont S}};
\draw (169,72.93) node [anchor=north west][inner sep=0.75pt]  [font=\Large] [align=left] {{\fontfamily{ptm}\selectfont I}};
\draw (265,72.93) node [anchor=north west][inner sep=0.75pt]  [font=\Large] [align=left] {{\fontfamily{ptm}\selectfont R}};
\draw (101,32) node [anchor=north west][inner sep=0.75pt]    {$\beta \rho _{S} \rho _{I}$};
\draw (211,35) node [anchor=north west][inner sep=0.75pt]    {$\gamma \rho _{I}$};
\draw (147,126.24) node [anchor=north west][inner sep=0.75pt]    {$\theta _{1} \rho _{S} \rho _{V}$};

\end{tikzpicture}
    \caption{\small Visualization of the flow of three populations. The susceptible transforms to the infected with a rate $\beta$ and the recovered with a rate $\theta_1$. The infected transforms to the recovered with a rate $\gamma$.}
    \label{fig:SIRflow}
\end{figure}

We present two experiments that demonstrate how the various factors in the formulation affect the production and the distribution of vaccines.

\subsection{Experiment~1}\label{subsection:exp0}

In this experiment, we show that Algorithm~\ref{alg:gproxPDHG} converges independent of grid sizes when we use the preconditioning operator defined in Proposition~\ref{prop:explicit-formula}. Consider the initial densities for the $\rho_i$ ($i\in \mathbb{S}$) and the factory location $\Omega_{factory}$ as
\begin{equation}\label{eq:exp1-initial-densities}
    \begin{aligned}
        \rho_S(0,x) &= \left( 2 \exp(-5[(x_1-0.7)^2 + (x_2 - 0.7)^2]) - 1.5 \right)_+\\
        \rho_I(0,x) &= \left( 2 \exp(-5[(x_1-0.7)^2 + (x_2 - 0.7)^2]) - 1.8 \right)_+\\
        \rho_R(0,x) &= 0\\
        \rho_V(0,x) &= 0\\
        \Omega_{factory} &= B_{0.1}(0.3,0.3)
    \end{aligned}
\end{equation}
where $(x)_+ = \max(x,0)$ and $B_r(x)$ is a ball of a radius $r$ centered at $x$. Figure~\ref{fig:my_label} shows the images of initial conditions~\eqref{eq:exp1-initial-densities}.
\begin{figure}[h]
    \centering
    \includegraphics[width=0.5\linewidth]{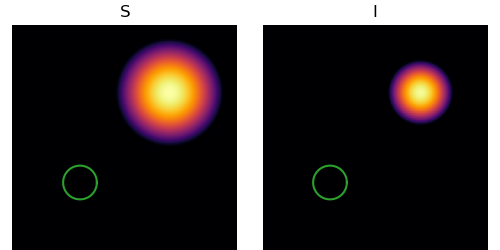}
    \caption{\small Experiment~1: Initial densities of $\rho_S$ (left) and $\rho_I$ (right). The green circle indicates $\Omega_{factory}$.}
    \label{fig:my_label}
\end{figure}
We compute the solution of the SIRV variational problem~\eqref{eq:SIRV-variational-problem} with the above initial conditions using Algorithm~\ref{alg:gproxPDHG}. For simplicity, we assume recovered population density $\rho_R$ does not move. Thus, we use an arbitrary large number for a parameter $\alpha_R = 10^4$ to penalize when $|m_R| > 0$. The rest of the parameters are identical to the parameters defined in the preceding section. We ran four simulations with same initial conditions and same step sizes ($\tau=0.05$, $\sigma=0.2$) with four different grid sizes:\\

\begin{center}
 \begin{tabular}{|| c | c | c ||} 
 \hline
  $N_{x_1}$ & $N_{x_2}$ & $N_t$\\
 \hline
 $32$ & $32$ & $32$\\
 \hline
 $64$ & $64$ & $32$\\
 \hline
 $128$ & $128$ & $32$\\
 \hline
 $256$ & $256$ & $32$\\
 \hline
\end{tabular}
\end{center}
\medskip
The result of the experiment is depicted in Figure~\ref{fig:exp1-convergence-grid-size}. The figure shows the convergence plot of the algorithm with respect to the number of iteration for each grid size. The $x$-axis indicates the iteration number and the $y$-axis indicates the value of the following Lagrangian functional:
\[
    \begin{aligned}
        \tilde{\mathcal{L}}((\rho_i,m_i,\phi_i)_{i\in\mathbb{S}},f) &=
         \int_\Omega \sum_{i=S,I,V}\frac{a_i}{2}\rho_i(T,\cdot)^2 \, dx \\
        & + \int^T_0  \int_\Omega \sum_{i = S,I,R} F_i(\rho_i, m_i) \,dx\, dt +\int^T_{T'} \int_\Omega F_V(\rho_V,m_V)\,dx\, dt\\
        & + \int^T_0 \int_\Omega \frac{d_P}{2} (\rho_S+\rho_I+\rho_R)^2 + \frac{d_V}{2} \rho_V^2 \,dx\,dt + \int^{T'}_0 \int_\Omega \frac{d_0}{2} f^2 \, dx \, dt\\
        & - \int^T_0 \int_\Omega \phi_S \left( \partial_t \rho_S + \nabla \cdot m_S + \beta \rho_S K * \rho_I + \theta_1 \rho_S \rho_V - \frac{\eta_S^2}{2} \Delta \rho_S \right) \, dx \, dt\nonumber\\
        & - \int^T_0 \int_\Omega \phi_I \left( \partial_t \rho_I + \nabla \cdot m_I - \beta \rho_S K * \rho_I + \gamma \rho_I - \frac{\eta_I^2}{2} \Delta \rho_I \right) \, dx \, dt\nonumber\\
        & - \int^T_0 \int_\Omega \phi_R \left( \partial_t \rho_R + \nabla \cdot m_R - \gamma \rho_I - \theta_1 \rho_S \rho_V - \frac{\eta_R^2}{2} \Delta \rho_R \right) \, dx \, dt\nonumber\\
        & - \int^{T}_0 \int_\Omega \phi_V \left( \partial_t \rho_V - f \mathcal{X}_{[0,T')}(t) + \nabla \cdot m_V \mathcal{X}_{[T',T]}(t) + \theta_2 \rho_S \rho_V  \right) \, dx \, dt.
    \end{aligned}
\]
Note that this Lagrangian functional $\tilde{\mathcal{L}}$ is different from~~\eqref{eq:var_vacc} and~\eqref{eq:saddle}. The terms with indicator functions $i_{\Omega_{factory}}$, $i_{[-\infty,t]}$ are removed to avoid representing $+\infty$ numerically. The absence of the terms may explain that the value $\tilde{\mathcal{L}}$ increases in the first $500$ iterations and then decreases afterwards. Figure~\ref{fig:exp1-rhoV-solution} shows the computed solutions at iteration $3000$  from four different spatial grid sizes ($32\times32$, $64\times64$, $128\times128$, $256\times256$). Each row of the figure shows the evolution of a vaccine density $\rho_V$ from time $t=0$ to $t=1$ computed from each grid size.   
These figures clearly show that the algorithm converges to the same saddle point independent of the grid sizes. 
\begin{figure}
    \centering
    \includegraphics[width=0.7\textwidth]{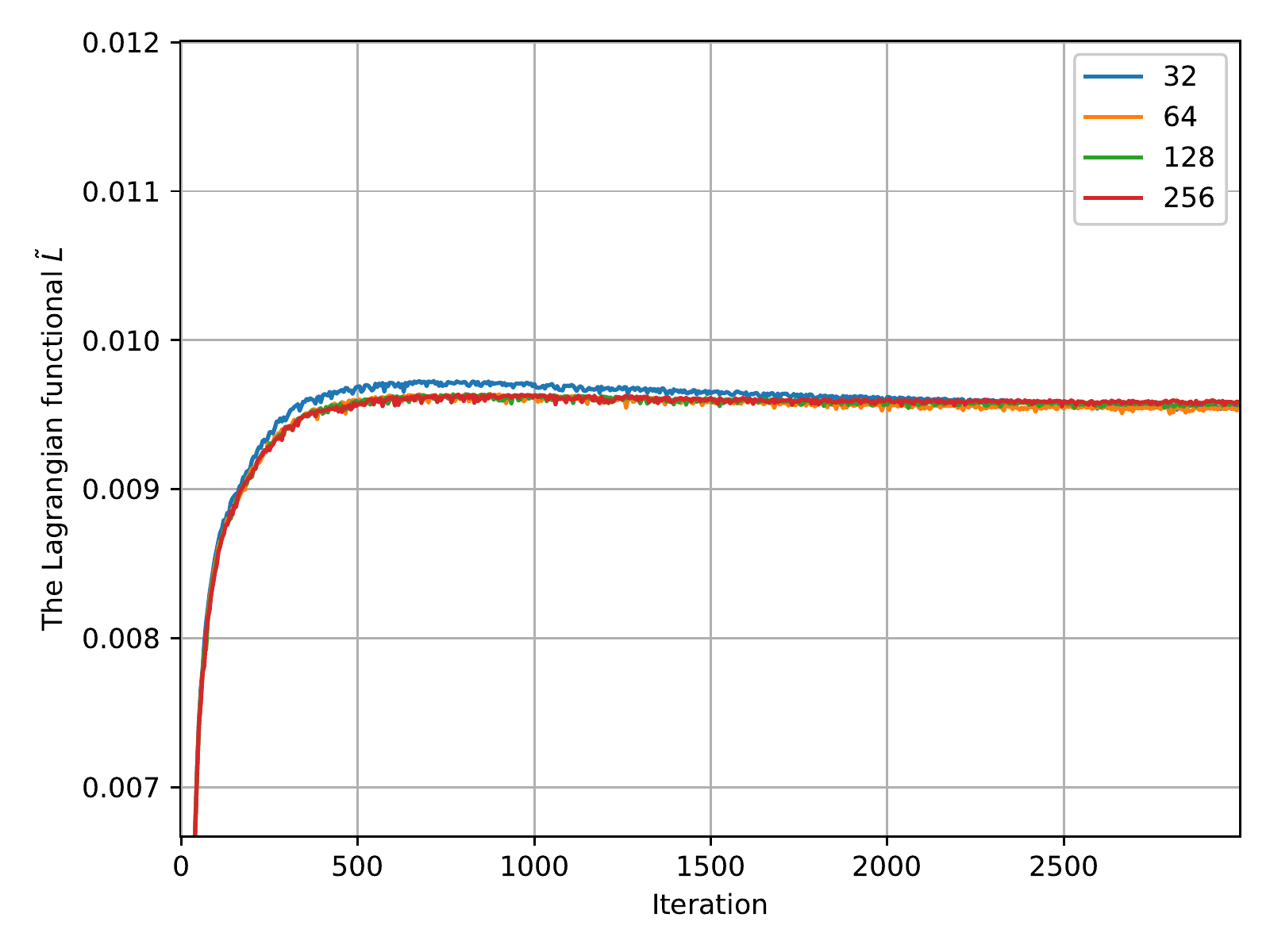}
    \caption{Convergence plot of the algorithm for each grid size ($N_{x_1}=N_{x_2}=32,64,128,256$) with the same step sizes ($\tau=0.05$, $\sigma=0.2$). The plot shows that the convergence of the algorithm is independent of grid sizes.}
    \label{fig:exp1-convergence-grid-size}
\end{figure}

\begin{figure}[h]
    \centering
    \includegraphics[width=0.8\textwidth]{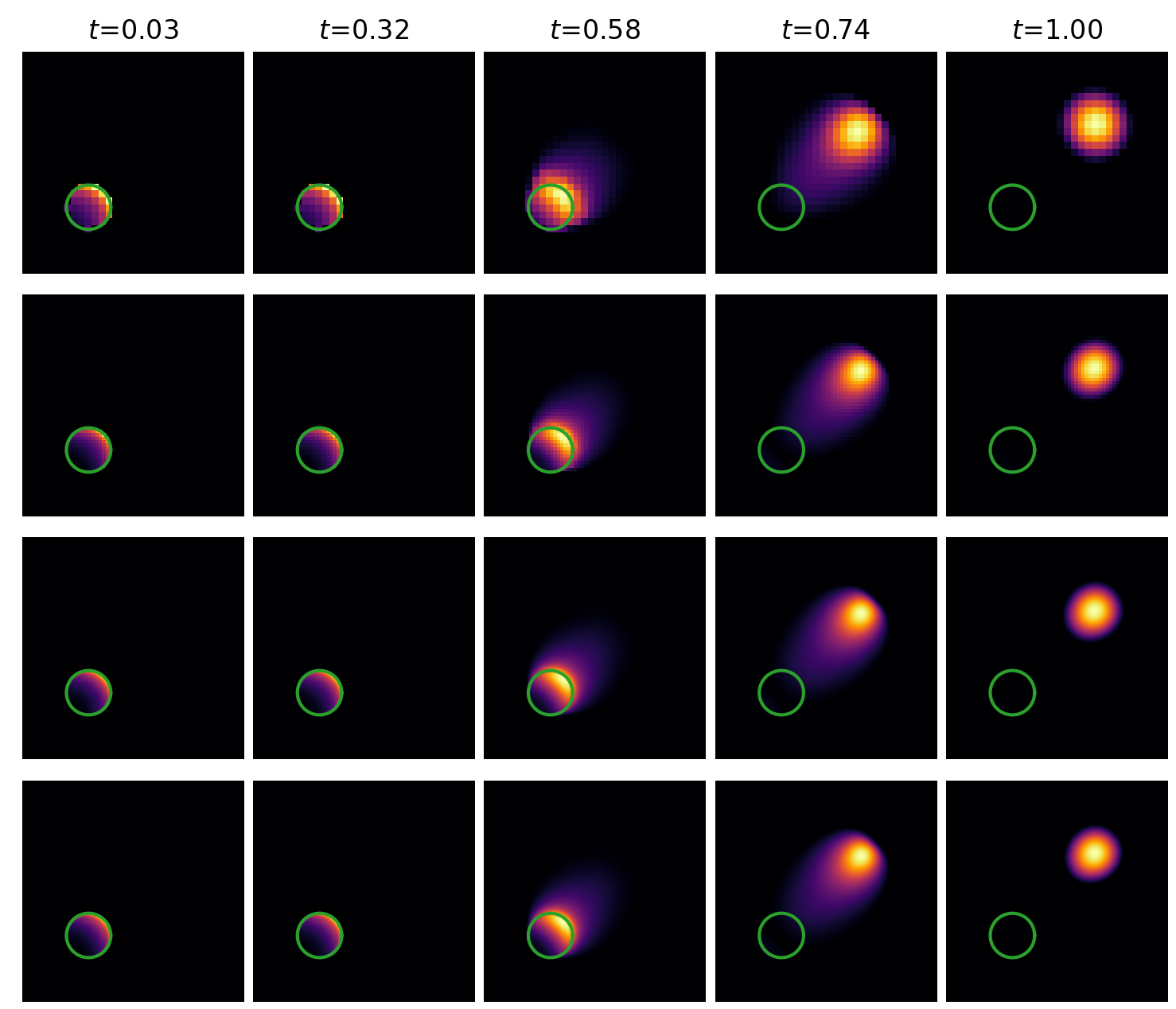}
    \caption{\small Computed solution of a vaccine density variable $\rho_V$ from Experiment~1. Each row shows the evolution of a vaccine density variable from time $t=0$ to $t=1$ with different grid sizes. Row 1: $32\times32$, Row 2: $64\times64$, Row 3: $128\times128$, Row 4: $256\times256$.}
    \label{fig:exp1-rhoV-solution}
\end{figure}

\subsection{Experiment~2}\label{subsection:exp1}

In this experiment, we show how the parameters related to the vaccine density variable $\rho_V$ ($\theta_1, \theta_2, f_{max}, C_{factory}$) affect the solution. We use the same initial densities for $\rho_i$ ($i\in\mathbb{S}$) and $f$ as in Experiment~1. With the initial densities~\eqref{eq:exp1-initial-densities}, we run two simulations with different values for $\theta_1$, $\theta_2$, and $f_{max}$.\\

\begin{center}
 \begin{tabular}{||m{5.5em} | m{3.5em} | m{3.5em} | m{19em}||} 
 \hline
  Parameters & Sim 1 & Sim 2 & Description\\
 \hline\hline
 $\theta_1$ & $0.5$ & $0.9$ & Vaccine efficiency \\ 
 \hline
 $f_{max}$ & $0.5$ & $10$ & Maximum production rate of vaccines   \\
 \hline
  $C_{factory}$ & $0.5$ & $2$ & Maximum amount of vaccines that can be produced at $x\in\Omega$ during $0\leq t \leq \frac12$  \\
 \hline
\end{tabular}
\end{center}

\medskip

\noindent Figure~\ref{fig:exp1-sim1-sim2} shows the comparison between the results from the simulation~1 and the simulation~2. The first three plots (Figure~\ref{fig:exp1-SIR-total-masses}) show the total mass of $\rho_i$ ($i=S,I,R$), i.e.
\[
    \int_\Omega \rho_i(t,x)\, dx,\quad i=S,I,R,\quad t\in[0,1].
\]
and the last plot (Figure~\ref{fig:exp1-vaccine-total-mass}) shows the total mass of $\rho_V$ during $0\leq t \leq \frac12$
\[
    \int_\Omega \rho_V(t,x)\, dx,\quad t\in\left[0,\frac12\right].
\]
The total number of vaccines produced from the simulation~1 is smaller than that from the simulation~2 because the solution cannot produce a large amount of vaccines due to the low production rate $f_{max}$. Furthermore, the solution from the simulation~1 cannot vaccinate a large number of susceptible due to a small $\theta_1$. Thus, there are more susceptible and less recovered at the terminal time in the simulation~1.
\begin{figure}[h]
     \centering
     \begin{subfigure}[b]{1\textwidth}
         \centering
         \includegraphics[width=1\linewidth]{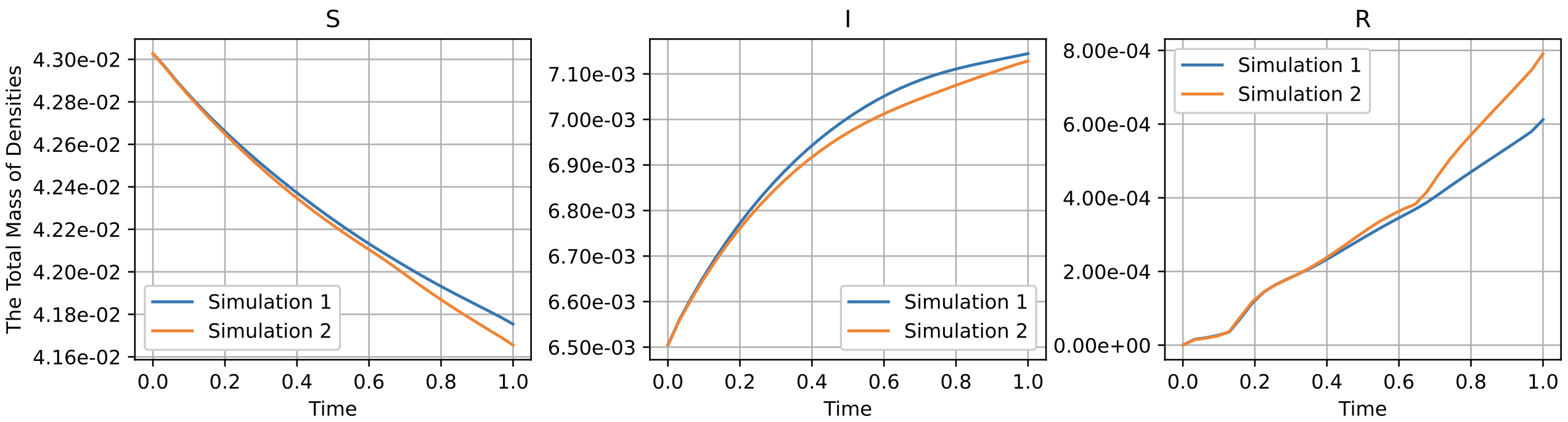}
         \caption{\small The total populations of $\rho_S$, $\rho_I$, $\rho_R$.}
         \label{fig:exp1-SIR-total-masses}
     \end{subfigure}\\
     \begin{subfigure}[b]{1\textwidth}
         \centering
         \includegraphics[width=0.33\textwidth]{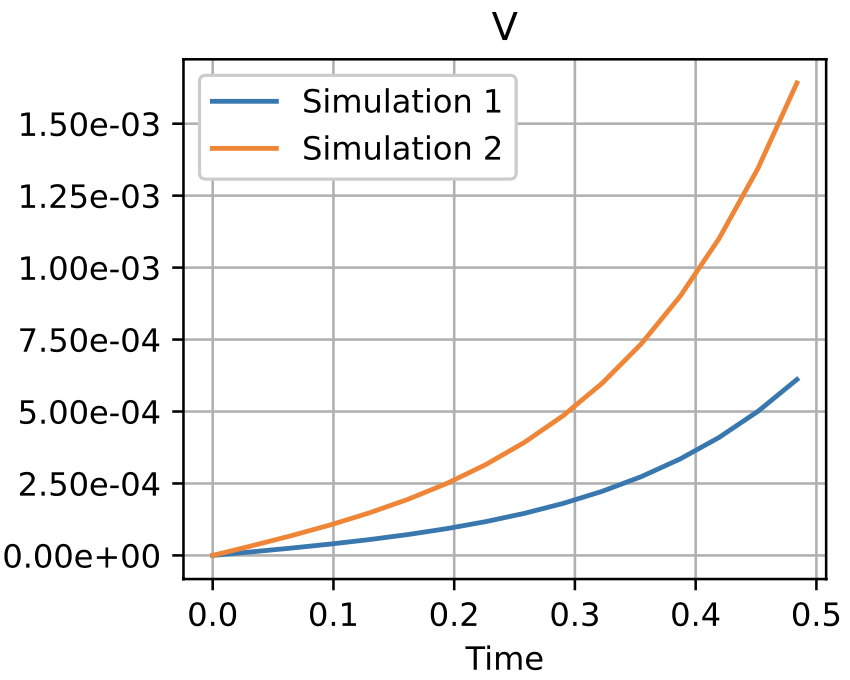}
         \caption{\small The total mass of vaccines produced during $0\leq t \leq 0.5$.}
         \label{fig:exp1-vaccine-total-mass}
     \end{subfigure}
     \caption{\small Experiment~2: The comparison between the results from the simulation~1 and the simulation~2. The first three plots~{(a)} show the total mass of $\rho_i$ ($i=S,I,R$) and the fourth plot~{(b)} shows the total mass of $\rho_V$ produced at the factory area during the production time $0\leq t < 0.5$.}
    \label{fig:exp1-sim1-sim2}
\end{figure}

\subsection{Experiment~3}

This experiment includes the spatial obstacles and shows how the algorithm effectively finds the solution that utilizes the vaccine production and distribution given spatial barriers.  Denote a set $\Omega_{obs} \subset \Omega$ as obstacles. We use the following functionals in the experiment.
\begin{align*}
    \mathcal{G}_P(\rho(t,\cdot)) &= \int_\Omega  \sum_{i\in\{S,I,R\}} \frac{d_i}{2}  \rho_i^2(t,x) + i_{\Omega_{obs}}(x) \left(\sum_{i\in\{S,I,R\}}\rho_i(t,x) \right)\, dx\\
    \mathcal{G}_V(\rho(t,\cdot)) &= \int_\Omega \frac{d_V}{2} \rho_V^2(t,x) + i_{\Omega_{obs}}(x) \rho_V(t,x)\, dx\\
    \mathcal{E}_i(\rho(1,\cdot)) &= \int_\Omega \frac{a_i}{2} \rho_i^2(1,x) + i_{\Omega_{obs}}(x) \rho_i(1,x)\, dx,\quad i\in\{S,I,V\}\\
    \mathcal{E}_R(\rho(1,\cdot)) &= \int_\Omega \frac{a_R}{2} (\rho_R(1,x) - 1)^2 + i_{\Omega_{obs}}(x) \rho_R(1,x)\, dx.
\end{align*}
The densities $\rho_i$ ($i\in\mathbb{S}$) cannot be positive on $\Omega_{obs}$ due to $i_{\Omega_{obs}}$. Thus, the densities transport while avoiding the obstacle $\Omega_{obs}$. We show two sets of experiments based on this setup.

\subsubsection{Single factory}
We set the initial densities and $\Omega_{factory}$ as follows
\begin{equation*}
    \begin{aligned}
        \rho_S(0,x) &= \left( 2 \exp(-15((x_1-0.2)^2+(x_2-0.5)^2)) - 1.6 \right)_+\\
                    & +\left( 2 \exp(-15((x_1-0.8)^2+(x_2-0.5)^2)) - 1.6 \right)_+\\
        \rho_I(0,x) &= \left( 2 \exp(-15((x_1-0.2)^2+(x_2-0.5)^2)) - 1.8 \right)_+\\
        \rho_R(0,x) &= 0\\
        \rho_V(0,x) &= 0\\
        \Omega_{factory} &= B_{0.075}(0.5,0.5)
    \end{aligned}
\end{equation*}
and fix the parameters 
\[
    \theta_1 = 0.9,\quad f_{max} = 10, \quad C_{factory} = 2.
\]
The initial densities are shown in Figure~\ref{fig:exp2-initial-densities}.
\begin{figure}[h]
    \centering
    \includegraphics[width=0.5\linewidth]{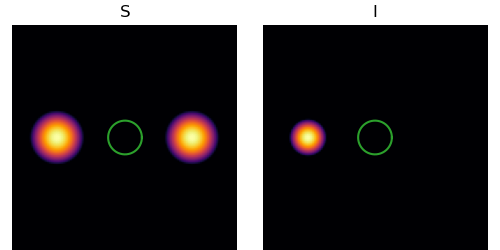}
    \caption{\small Experiment~3: The initial densities $\rho_S$ (left) and $\rho_I$ (right), and the location of the factory (indicated as a green circle).}
    \label{fig:exp2-initial-densities}
\end{figure}

\begin{figure}[h]
    \begin{picture}(450,300)
    \put(0,230){$\rho_S$} 
    \put(0,160){$\rho_I$} 
    \put(0,95){$\rho_R$} 
    \put(0,31){$\rho_V$}
    \put(16,0){\includegraphics[width=0.98\linewidth]{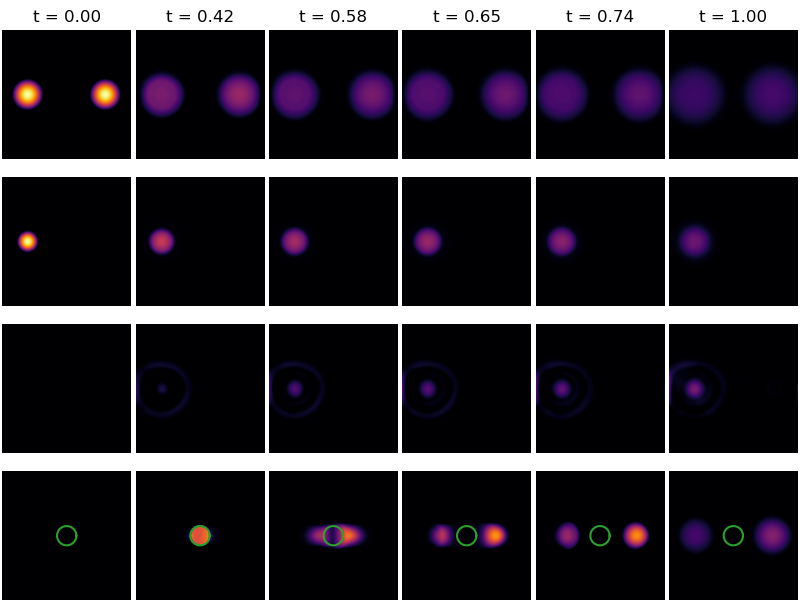}}
    \end{picture}
    \caption{\small Experiment~3: The evolution of densities $\rho_i$ ($i\in\mathbb{S}$) without the obstacle over time $0\leq t \leq 1$. The first row: the susceptible density $\rho_S$. The second row: the infected density $\rho_I$. The third row: the recovered density $\rho_R$. The fourth row: the vaccine density $\rho_V$.}
    \label{fig:exp2-solution-without-obstacle}
\end{figure}

\begin{figure}[h]
    \begin{picture}(450,300)
    \put(0,230){$\rho_S$} 
    \put(0,160){$\rho_I$} 
    \put(0,95){$\rho_R$} 
    \put(0,31){$\rho_V$}
    \put(16,0){\includegraphics[width=0.98\linewidth]{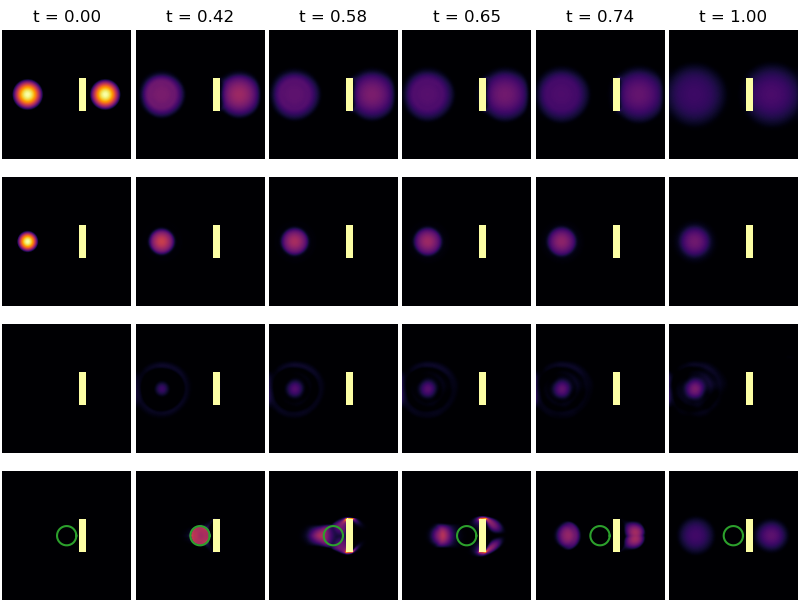}}
    \end{picture}
    \caption{\small Experiment~3: The evolution of densities $\rho_i$ ($i\in\mathbb{S}$) with the obstacle (indicated as a yellow block) over time $0\leq t \leq 1$. The first row: the susceptible density $\rho_S$. The second row: the infected density $\rho_I$. The third row: the recovered density $\rho_R$. The fourth row: the vaccine density $\rho_V$.}
    \label{fig:exp2-solution-with-obstacle}
\end{figure}
Figure~\ref{fig:exp2-solution-without-obstacle} and Figure~\ref{fig:exp2-solution-with-obstacle} show the evolution of densities with and without obstacles, respectively. In both simulations, the density of vaccines $\rho_V$ (the fourth row) transports to the areas where the susceptible people are present. In Figure~\ref{fig:exp2-solution-with-obstacle}, $\rho_V$ transports while avoiding the obstacle at the right. Figure~\ref{fig:exp2-comparison} shows the comparison between these two solutions and how the presence of the obstacle affects the production and delivery of vaccines quantitatively. Figure~\ref{fig:exp2-comparison-left} shows the total mass of the vaccines in the factory area $\Omega_{factory}$ during the production time
\[
    \int_{\Omega_{factory}} \rho_V(t,x)\, dx, \quad t\in[0,0.5).
\]
Figure~\ref{fig:exp2-comparison-right} shows the total mass of the vaccines during the delivery time at the left side and the right side of the domain
\[
\begin{aligned}
    \int_{\Omega \cap \{x_1<0.5\}} \rho_V(t,x)\, dx&, \quad \text{Left}\\
    \int_{\Omega \cap \{x_1\geq 0.5\}} \rho_V(t,x)\, dx&, \quad \text{Right}
\end{aligned}
\]
during $t\in[0.5,1]$. When there is no obstacle, the vaccines are delivered more to the right than to the left (Figure~\ref{fig:exp2-comparison-right}). The number of susceptible people at the left decreases very fast because there are infected people with a high infection rate. When $\rho_V$ starts to transport at time $t=0.5$, the number of susceptible is lower at the left. Thus, the solution distributes fewer vaccines to the left with less susceptible people. When there is an obstacle, $\rho_V$ has to bypass the obstacle to reach the susceptible areas. Thus, the kinetic energy cost during the delivery time $t\in[0.5,1]$ increases at the right. The solution cannot deliver the vaccines as much as the case without the obstacle. It results in a fewer number of vaccines  produced during $t\in[0,0.5)$ (Figure~\ref{fig:exp2-comparison-left}) and delivered to the right during $t\in[0.5,1]$ when there is an obstacle (Figure~\ref{fig:exp2-comparison-right}).

\begin{figure}[h]
    \centering
    \begin{subfigure}[b]{0.5\textwidth}
         \centering
         \includegraphics[width=1\linewidth]{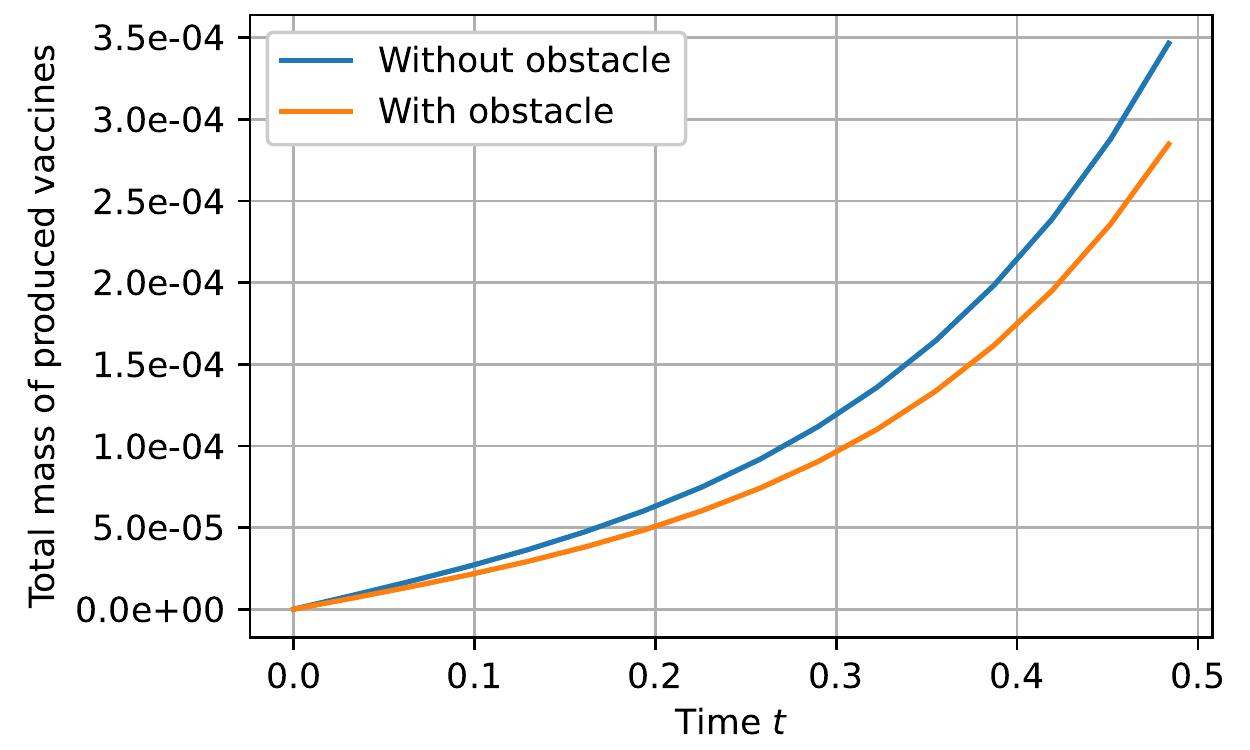}
         \caption{\small The total mass of $\rho_V$ during $t\in[0,0.5)$}
         \label{fig:exp2-comparison-left}
     \end{subfigure}\hfill
     \begin{subfigure}[b]{0.5\textwidth}
         \centering
         \includegraphics[width=1\linewidth]{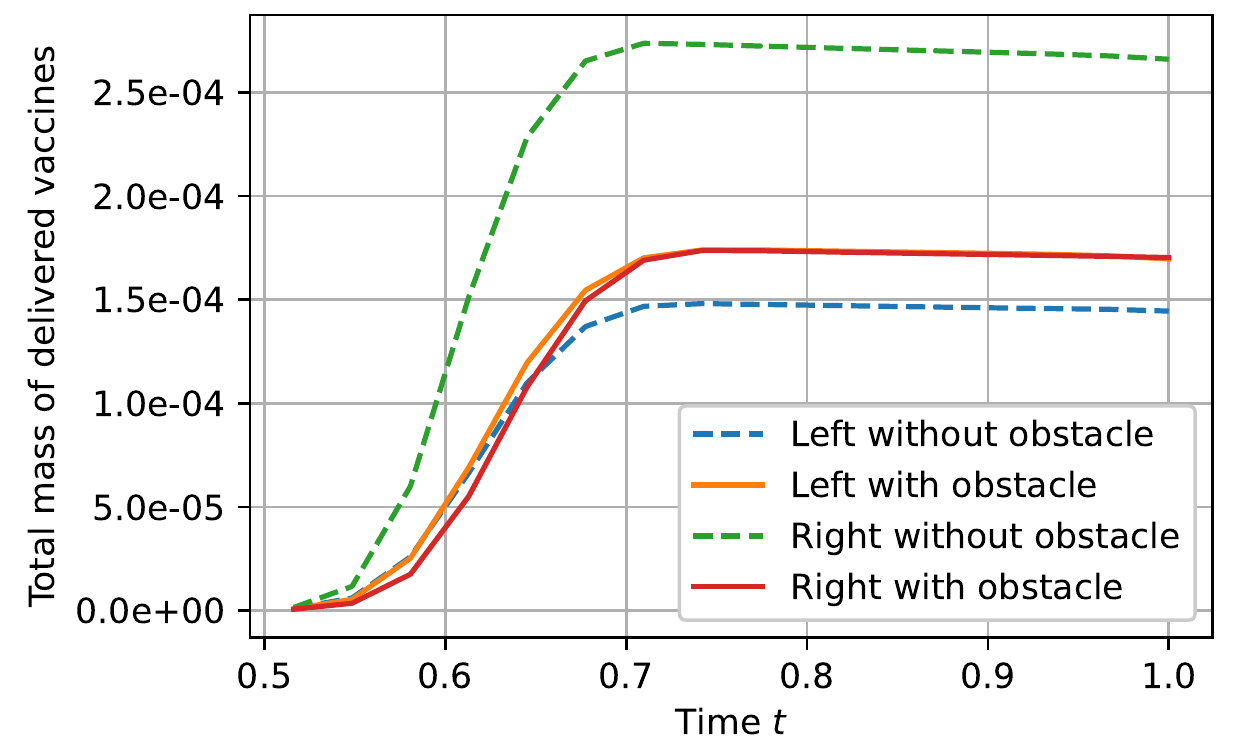}
         \caption{\small The total mass of $\rho_V$ during $t\in[0.5,1]$}
         \label{fig:exp2-comparison-right}
     \end{subfigure}
    \caption{\small Experiment~3: The left plot shows the total mass of vaccine density $\rho_V$ during the production time $t\in[0,0.5)$. The right plot shows the total mass of $\rho_V$ at the left side of the domain $\Omega \cap \{x_1<0.5\}$ and at the right side of the domain $\Omega \cap \{x_1\geq 0.5\}$.}
    \label{fig:exp2-comparison}
\end{figure}

\subsubsection{Multiple factories}

Similar to the previous experiment, we show how the obstacles in the spatial domain affect the production and distribution of the vaccines. We use more complex initial densities, an obstacle set $\Omega_{obs}$, and three factory locations in this experiment. We set the initial densities and $\Omega_{factory}$ as follows
\begin{align*}
\rho_S(0,x) &= \left(  2 \exp(-15((x_1-0.8)^2+(x_2-0.8)^2)) - 1.6 \right)_+\\
& +\left(  2 \exp(-15((x_1-0.2)^2+(x_2-0.7)^2)) - 1.6 \right)_+\\
& +\left(  2 \exp(-15((x_1-0.8)^2+(x_2-0.3)^2)) - 1.6 \right)_+\\
& +\left(  2 \exp(-15((x_1-0.2)^2+(x_2-0.2)^2)) - 1.6 \right)_+\\
\rho_I(0,x) &= \left(  2 \exp(-15((x_1-0.2)^2+(x_2-0.7)^2)) - 1.8 \right)_+\\
&+ \left(  2 \exp(-15((x_1-0.2)^2+(x_2-0.2)^2)) - 1.8 \right)_+\\
\rho_R(0,x) &= 0\\
\rho_V(0,x) &= 0\\
\Omega_{factory} &= B_{0.075}(0.5,0.2) \, \cup \, B_{0.075}(0.5,0.5) \, \cup \, B_{0.075}(0.5,0.8)
\end{align*}
and fix the parameters 
\[
\theta_1 = 0.9,\quad f_{max} = 10, \quad C_{factory} = 2.
\]
The initial densities are shown in Figure~\ref{fig:exp3-initial-densities}.
\begin{figure}[h]
	\centering
	\includegraphics[width=0.5\linewidth]{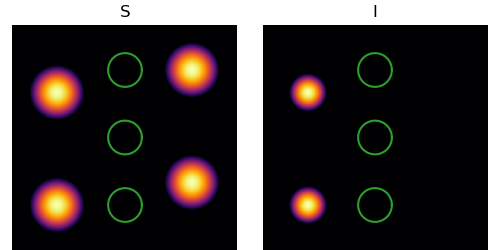}
	\caption{\small Experiment~3: The initial densities $\rho_S$ (left) and $\rho_I$ (right), and the location of the factory (indicated as green circles).}
	\label{fig:exp3-initial-densities}
\end{figure}

\begin{figure}[h]
	\begin{picture}(450,300)
	\put(0,227){$\rho_S$} 
	\put(0,162){$\rho_I$} 
	\put(0,95){$\rho_R$} 
	\put(0,28){$\rho_V$}
	\put(16,0) {\includegraphics[width=0.98\linewidth]{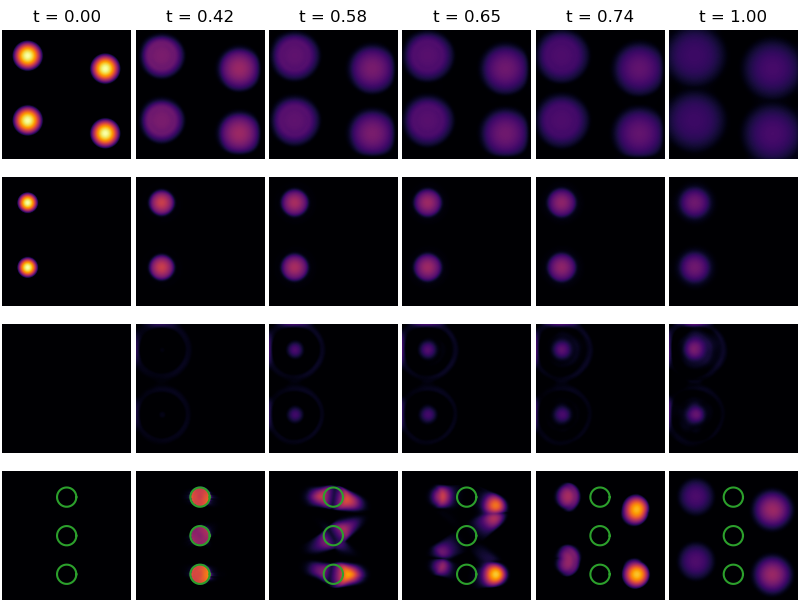}}
	\end{picture}
	\caption{\small Experiment~3: The evolution of densities $\rho_i$ ($i\in\mathbb{S}$) without the obstacle over time $0\leq t \leq 1$. The first row: the susceptible density $\rho_S$. The second row: the infected density $\rho_I$. The third row: the recovered density $\rho_R$. The fourth row: the vaccine density $\rho_V$.}
	\label{fig:exp3-solution-without-obstacle}
\end{figure}

\begin{figure}[h]
	\begin{picture}(450,300)
	\put(0,227){$\rho_S$} 
	\put(0,162){$\rho_I$} 
	\put(0,95){$\rho_R$} 
	\put(0,28){$\rho_V$}
	\put(16,0){\includegraphics[width=0.98\linewidth]{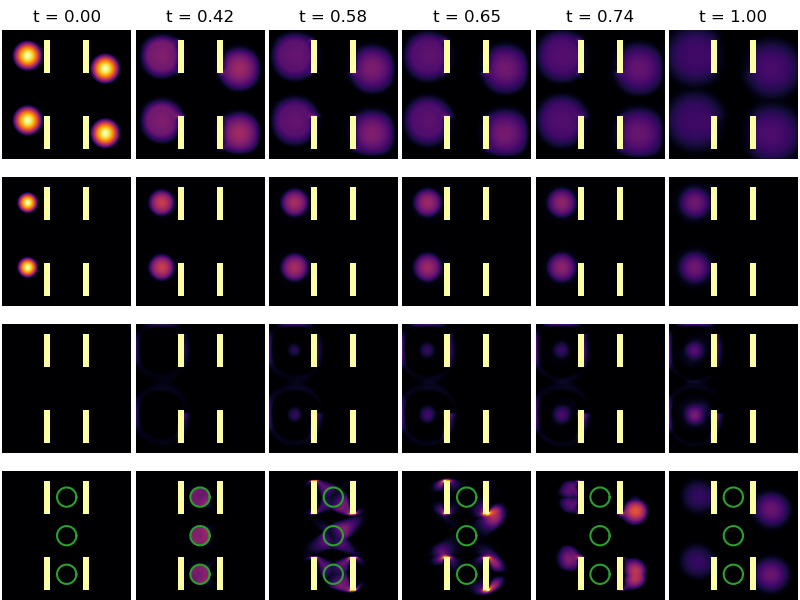}}
	\end{picture}
	\caption{\small Experiment~3: The evolution of densities $\rho_i$ ($i\in\mathbb{S}$) with the obstacle (colored yellow) over time $0\leq t \leq 1$. The first row: the susceptible density $\rho_S$. The second row: the infected density $\rho_I$. The third row: the recovered density $\rho_R$. The fourth row: the vaccine density $\rho_V$.}
	\label{fig:exp3-solution-with-obstacle}
\end{figure}
Figure~\ref{fig:exp3-solution-without-obstacle} and Figure~\ref{fig:exp3-solution-with-obstacle} show the evolution of densities with and without obstacles, respectively. The experiment demonstrates that even with the complex initial densities, the algorithm successfully converges to the reasonable solution that coincides with the previous experiments. The density of vaccines $\rho_V$ (the fourth row) transports to the areas where the susceptible people are present while avoiding the obstacles.\\

Figure~\ref{fig:exp3-comparison-left} shows the total mass of the vaccines produced during the production time at each factory location. Without the obstacles, the total mass of $\rho_V$ at the middle is the lowest at time $0.5$ because the factory at the middle is the farthest away from the susceptible people. It is more efficient to produce the vaccines at the factories closer to the susceptible (the top and the bottom) to reduce the kinetic energy cost during the delivery time $t\in [0.5,1]$. However, the vaccines are produced the most at the middle factory with the obstacles. Since the obstacles block the paths between the top and the bottom factories and the susceptible people, $\rho_V$ has to bypass them to reach the target area. The pathways from the middle factory to the susceptible people are not blocked as much as from the top and the bottom factories. Thus, producing more vaccines at the middle factory is more efficient. \\

Figure~\ref{fig:exp3-comparison-right} shows the total mass of the vaccines during the delivery time at different locations. The lines in the plot represent the following quantities:
\[
\begin{aligned}
\int_{\Omega \cap \{x_1<0.5\} \cap \{x_2\geq0.5\}} \rho_V(t,x)\, dx&, \quad \text{Top Left} &
\int_{\Omega \cap \{x_1\geq 0.5\} \cap \{x_2\geq0.5\}} \rho_V(t,x)\, dx&, \quad \text{Top Right}\\
\int_{\Omega \cap \{x_1<0.5\} \cap \{x_2<0.5\}} \rho_V(t,x)\, dx&, \quad \text{Bottom Left} &
\int_{\Omega \cap \{x_1\geq 0.5\} \cap \{x_2<0.5\}} \rho_V(t,x)\, dx&, \quad \text{Bottom Right}
\end{aligned}
\]
over $t\in[0.5,1]$. With the obstacles, the kinetic energy cost increases since $\rho_V$ has to bypass to reach to the targets when it transports from the top and the bottom factories. As a result, the vaccines are not produced as much as the simulation without the obstacles, and there are less vaccines reached to the targets.

\begin{figure}[h]
	\centering
	\begin{subfigure}[b]{0.55\textwidth}
		\centering
		\includegraphics[width=1\linewidth]{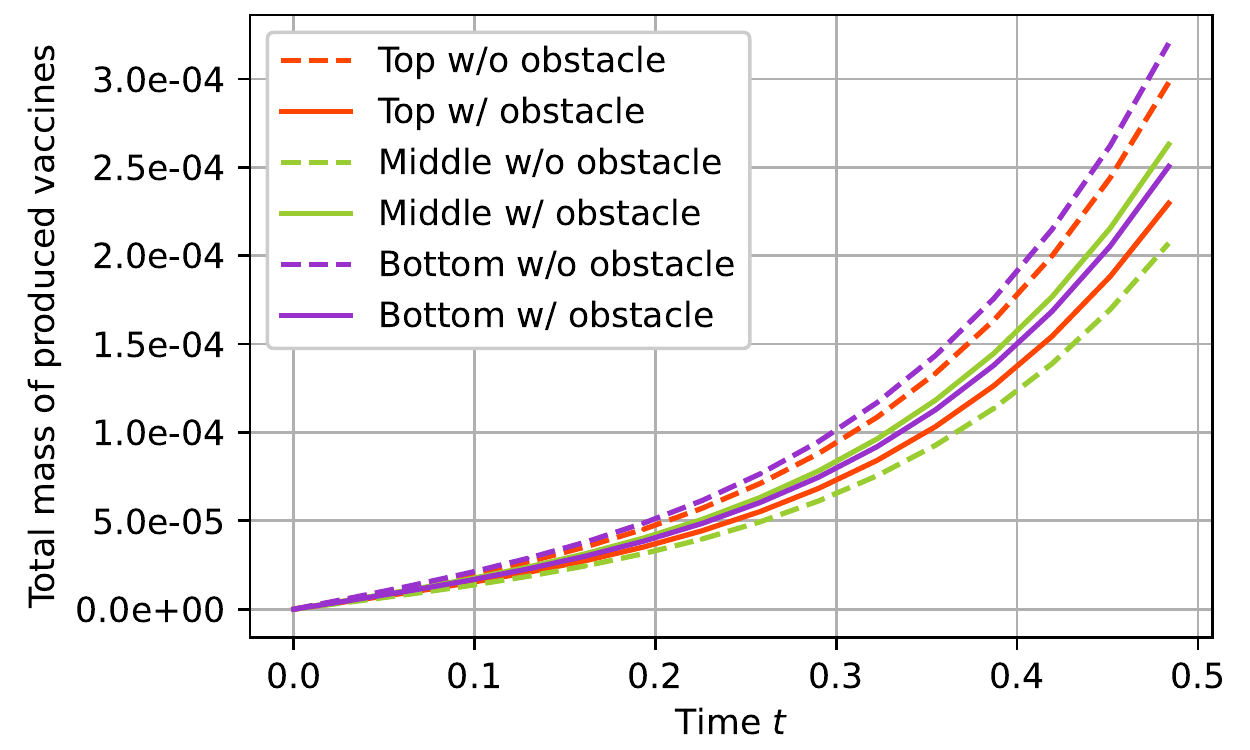}
		\caption{\small The total mass of $\rho_V$ during $t\in[0,0.5)$}
		\label{fig:exp3-comparison-left}
	\end{subfigure}\\
	\begin{subfigure}[b]{0.65\textwidth}
		\centering
		\includegraphics[width=1\linewidth]{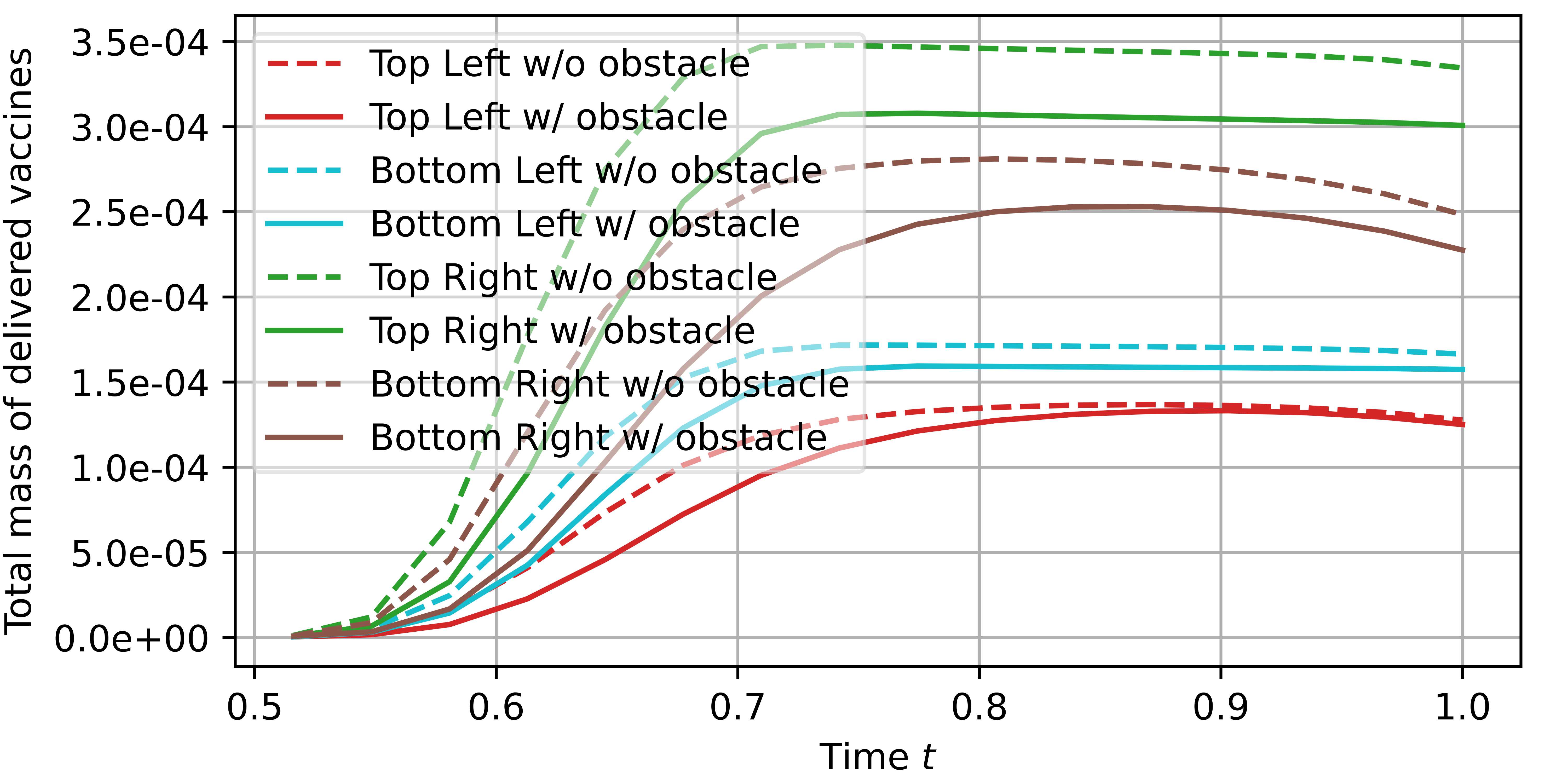}
		\caption{\small The total mass of $\rho_V$ during $t\in[0.5,1]$}
		\label{fig:exp3-comparison-right}
	\end{subfigure}
	\caption{\small Experiment~3: The top plot shows the total mass of vaccine density $\rho_V$ at three factory locations during the production time $t\in[0,0.5)$. The bottom plot shows the total mass of $\rho_V$ at the top left area of the domain $\Omega \cap \{x_1<0.5\} \cap \{x_2\geq0.5\}$, at the bottom left area $\Omega \cap \{x_1<0.5\} \cap \{x_2<0.5\}$, at the top right area $\Omega \cap \{x_1\geq0.5\} \cap \{x_2\geq0.5\}$, and at the bottom right area $\Omega \cap \{x_1\geq 0.5\}\cap \{x_2 < 0.5\}$ during the distribution time $t\in[0.5,1]$.}
	\label{fig:exp3-comparison}
\end{figure}

\subsection{Experiment~4}

This experiment compares the vaccine production strategy generated by the algorithm and the strategy with the fixed rates of production without using the algorithm. The initial densities and $\Omega_{factory}$ are set as follows
\begin{align*}
        \rho_S(0,x) &= \left(  4 \exp(-15((x_1-0.5)^2+(x_2-0.55)^2)) - 1.6 \right)_+\\
        \rho_I(0,x) &= \left(  4 \exp(-15((x_1-0.5)^2+(x_2-0.55)^2)) - 1.8 \right)_+\\
        \rho_R(0,x) &= 0\\
        \rho_V(0,x) &= 0\\
        \Omega_{factory} &= B_{0.04}(0.1,0.3) \, \cup \, B_{0.04}(0.5,0.3) \, \cup \, B_{0.04}(0.9,0.4).
\end{align*}
We fix the parameters 
\[
    \theta_1 = 0.9,\quad f_{max} = 5, \quad C_{factory} = 1.
\]
The initial densities and locations of factories are shown in Figure~\ref{fig:exp3-new-initial-densities}.
\begin{figure}[h]
    \centering
    \includegraphics[width=0.6\linewidth]{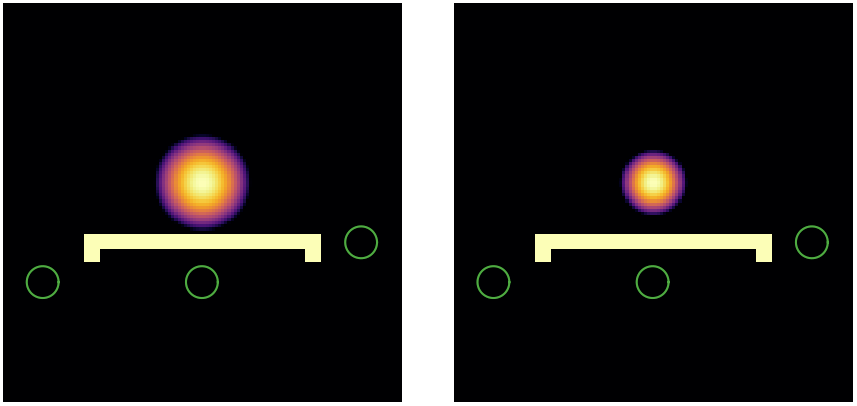}
    \caption{\small Experiment~4: The initial densities $\rho_S$ (left) and $\rho_I$ (right), the location of the factory (indicated as green circles), and the obstacle (colored yellow).}
    \label{fig:exp3-new-initial-densities}
\end{figure}

To fairly compare the effect of the optimal vaccine production strategy, we remove the momentum of $S$, $I$, $R$ groups; thus, removing the spatial movements defined by $m_S$, $m_I$, $m_R$. We consider the following PDEs:
 \begin{equation*}
     \begin{aligned}
         &\partial_t \rho_S  = - \beta \rho_S K * \rho_I + \frac{\eta_S^2}{2} \Delta \rho_S - \theta_1 \rho_V \rho_S && (t,x)\in(0,T)\times\Omega\\
        &\partial_t \rho_I = \beta \rho_S K * \rho_I - \gamma \rho_I + \frac{\eta^2_I}{2} \Delta \rho_I  && (t,x) \in(0,T)\times\Omega\\
        &\partial_t \rho_R  = \gamma \rho_I + \frac{\eta^2_R}{2} \Delta \rho_R + \theta_1 \rho_V \rho_S  && (t,x) \in(0,T)\times\Omega\\
        &\partial_t \rho_V = f(t,x) - \theta_2 \rho_V \rho_S && (t,x) \in(0,T')\times\Omega\\
        &\partial_t \rho_V + \nabla \cdot m_V  = - \theta_2 \rho_V \rho_S  && (t,x) \in[T',T)\times\Omega.
     \end{aligned}
 \end{equation*}
Furthermore, by taking out the momentum terms from $S$, $I$, $R$ groups, the cost functional for this experiment is
\begin{equation}\label{eq:var_vacc-quadratic_no_movements_SIR}
    \begin{split}
        {G}((\rho_i,m_i)_{i\in\mathbb{S}},f) &=
         \int_\Omega \frac{a_V}{2}\rho_V(T,\cdot)^2 \, dx 
         + \int^T_{T'} \int_\Omega F_V(\rho_V,m_V)\,dx\, dt
         + \int^T_0 \int_\Omega \frac{d_V}{2} \rho_V^2 \,dx\,dt \\
        & + \int^{T'}_0 \int_\Omega \frac{d_0}{2} f^2 + i_{\Omega_{factory}} f\, dx \, dt\\
        & + \int^T_0 i_{\{\rho(t,\cdot) \leq C_{factory}\}}(\rho(t,\cdot)) + i_{\{f(t,\cdot) \leq f_{max}\}}(f(t,\cdot))\, dt\\
        & + \frac{\lambda}{2}\int^T_0\int_\Omega f^2 + \rho_V^2 + |m_V|^2 \,dx\,dt.
    \end{split}
\end{equation}
With the PDEs and the cost functionals above, we compare two results. The first result is using the optimal vaccine production and distribution strategy generated by the Algorithm~\ref{alg:gproxPDHG}. The second result is using the fixed vaccine production rate and the algorithm's distribution strategy. In the second result, the factory variable $f$ is fixed as
\[
    f(t,x) = \begin{cases}
     1.2,& (t,x)\in[0,T']\times\Omega_{factory}\\
     0,& (t,x)\in[0,T']\times\Omega\backslash\Omega_{factory}.
    \end{cases} 
\]
Figure~\ref{fig:exp3-new-solution-with-obstacle} shows the comparison between these two results. The result from the fixed production rate is ``without control", and the result from the optimal vaccine production strategy is ``with control". The labels ``left", ``middle", and ``right" are the locations of the factories in Figure~\ref{fig:exp3-new-initial-densities}. 
The solid lines, the result with the same fixed rates of production, show that all three factories produce identical amounts of vaccines. The dotted lines show the least amount of vaccines in the middle factory and much more in the left and right factories. When vaccines produce at the middle factory, one needs to pay more transportation costs because they bypass the obstacles. The obstacle does not block the paths from the left and right factories to the susceptible. Thus, it's an optimal choice to utilize the left and right more than the middle to minimize the transportation costs.

\begin{figure}[h]
\begin{center}
    \includegraphics[width=0.65\linewidth]{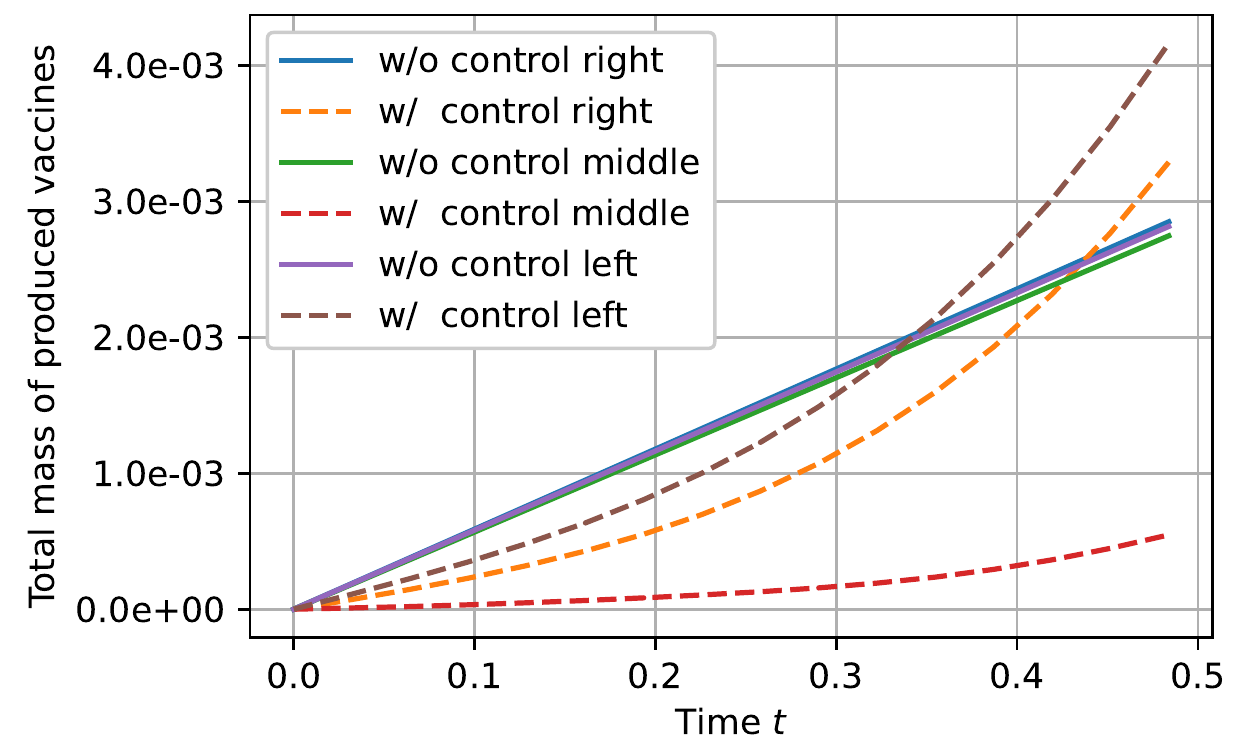}
    \caption{\small Experiment~4: The plot shows the total mass of vaccine densities $\int^t_0\int_\Omega \rho_V\,dx\,dt$ during production $t\in[0,T']$ at each factory location: left, middle, and right. The dotted lines are from the optimal strategy from the Algorithm~\ref{alg:gproxPDHG}, and the solid lines are from the fixed production rates.}
    \label{fig:exp3-new-solution-with-obstacle}
\end{center}
\end{figure}

The table below is the quantitative comparison between the two results.
\def\arraystretch{1.5}
\begin{center}
\begin{tabular}{|| m{6.5em}  m{15em}  m{6em}  m{6em}||} 
 \hline
 Quantity & Description & Algorithm~\ref{alg:gproxPDHG} & Fixed rates \\ [1ex] 
 \hline\hline
 $\int_\Omega \rho_V(\frac12,x)\,dx$ & The total amount of vaccines produced. & $7.997\times10^{-3}$ & $8.411\times10^{-3}$ \\
 \hline
 $\int_\Omega \rho_S(1,x)\,dx$ & The number of susceptible people at the terminal time.  & $1.520\times10^{-2}$ & $1.525\times10^{-2}$ \\ 
 \hline
 $\int_\Omega \rho_I(1,x)\,dx$ & The number of infected people at the terminal time.  & $5.133\times10^{-3}$ & $5.134\times10^{-3}$ \\ 
 \hline
 $\int^1_{\frac12}\int_\Omega \frac{|m_V|^2}{2\rho_V}\,dx\,dt$ & The transportation cost of vaccines.  & $7.339\times10^{-3}$ & $7.544\times10^{-3}$ \\ [1ex] 
 \hline
\end{tabular}
\end{center}

\medskip

\noindent The first row of the table shows that more vaccines are produced with a fixed rate of production. However, the result of the fixed-rate vaccinizes fewer susceptible people; as a result, more infected people at the terminal time. Furthermore, the result from the fixed rate shows higher transportation costs. The algorithm finds the more efficient strategy with fewer vaccines produced.


\section{Appendix}

\begin{proof}[Proof of Proposition~\ref{proposition:KKT-conditions}]
From the saddle point problem~\eqref{eq:saddle}, we can rewrite the problem as
\begingroup
\allowdisplaybreaks
\begin{equation}\label{eq:lagrangian-reformulated}
\begin{split}
    &\inf_{(\rho_i,m_i)_{i\in\mathbb{S}},f} \, \sup_\phi\, {G} ((\rho_i, m_i)_{i\in \mathbb{S}},f)
    -  \int^T_0 \int_\Omega
     \sum_{i\in \{S,I,R\}} \phi_i \left( \partial_t \rho_i + \nabla \cdot m_i - \frac{\eta_i^2}{2} \Delta \rho_i \right) \, dx \, dt\\
        & + \int^T_0 \mathcal{Q}((\rho_i,\phi_i)_{i\in\mathbb{S}})\,dt - \int^T_0\int_\Omega \phi_V \partial_t \rho_V \, dx \, dt
         + \int^{T'}_0 \int_\Omega f \phi_V \, dx \, dt
         - \int^T_{T'} \int_\Omega \phi_V  \nabla \cdot m_V \, dx \, dt 
        \end{split}
\end{equation}
\endgroup
where a function $\mathcal{Q}:(0,T)\times\Omega\rightarrow\mathbb{R}$ is defined as
    \[
        \mathcal{Q}((\rho_i,\phi_i)_{i\in\mathbb{S}}) = \int_\Omega \beta \rho_S (\phi_I - \phi_S) K * \rho_I + \gamma\rho_I (\phi_R - \phi_I) + \rho_S \rho_V\bigl( \theta_1 (\phi_R - \phi_S) - \theta_2 \phi_V)\bigl) \, dx.
    \]
If $((\rho_i, m_i, \phi_i)_{i\in\mathbb{S}},f)$ is the saddle point of the problem, the differential of Lagrangian with respect to $\rho_i$, $m_i$, $\phi_i$ ($i\in \mathbb{S}$), $f$ and $\rho_i(T,\cdot)$ ($i\in\{S,I,V\}$) equal to zero. Thus, from $\frac{\delta \mathcal{L}}{\delta \phi_i}=0$ we have
\begin{equation*}
    \begin{aligned}
        \partial_t\rho_i + \nabla\cdot m_i - \frac{\eta_i^2}{2}\Delta \rho_i + \frac{\delta \mathcal{Q}}{\delta \phi_i}((\rho_i,\phi_i)_{i\in\mathbb{S}}) &= 0 && (t,x)\in(0,T)\times\Omega,\quad i=S,I,R\\
        \partial_t\rho_V - f + \frac{\delta \mathcal{Q}}{\delta \phi_V}((\rho_i,\phi_i)_{i\in\mathbb{S}}) &= 0 && (t,x)\in(0,T')\times\Omega\\
        \partial_t\rho_V + \nabla\cdot m_V + \frac{\delta \mathcal{Q}}{\delta \phi_V}((\rho_i,\phi_i)_{i\in\mathbb{S}}) &= 0 && (t,x)\in(T',T)\times\Omega.
    \end{aligned}
\end{equation*}
Using integration by parts, we reformulate the Lagrangian function~\eqref{eq:lagrangian-reformulated} as follows. 
\begingroup
\allowdisplaybreaks
    \begin{equation*}
        \begin{aligned}
        & \mathcal{L} ((\rho_i,m_i,\phi_i)_{i\in \mathbb{S}},f) \\ =& \sum_{i \in \mathbb{S}}\mathcal{E}_i(\rho_i(T,\cdot)) + \int^T_0 \mathcal{G}_P(\rho_S + \rho_I + \rho_R) + \mathcal{G}_V(\rho_V)\,dt + \int^{T'}_0 \mathcal{G}_0(f(t,\cdot))\, dt\\ 
            & + \sum_{i = S,I,R} \int^T_0 \int_\Omega \frac{\alpha_i |m_i|^2}{2 \rho_i}  + m_i \cdot \nabla \phi_i + \frac{\eta_i^2}{2} \rho_i \Delta \phi_i \, dx \, dt + \sum_{i\in \mathbb{S}} \int^{T}_0 \int_\Omega \rho_i \partial_t \phi_i \, dx \, dt \\
            &+ \int^{T}_{T'} \int_\Omega \frac{\alpha_V |m_V|^2}{2 \rho_V} + m_V \cdot \nabla \phi_V \, dx \, dt  + \int^{T'}_0 \int_\Omega f \phi_V\, dx\,dt
             + \int^T_0 \mathcal{Q}((\rho_i,\phi_i)_{i\in\mathbb{S}})  \, dt\\
            & + \sum_{i = S,I,R, V} \int_\Omega \rho_i(0,x) \phi_i(0,x) - \rho_i(T,x) \phi_i(T,x) dx
        \end{aligned}
    \end{equation*}
\endgroup
    From $\frac{\delta \mathcal{L}}{\delta \rho_i}=0$ ($i\in\{S,I,R\}$),
    \begin{equation*}
        \begin{aligned}
            \frac{\delta \mathcal{G}_P}{\delta \rho_i}(\rho_S+\rho_I+\rho_R) + \frac{\delta \mathcal{Q}}{\delta \rho_i}((\rho_i,\phi_i)_{i\in\mathbb{S}})
            - \frac{\alpha_i |m_i|^2}{2\rho_i^2} + \frac{\eta_i^2}{2}\Delta \phi_i + \partial_t \phi_i &= 0 && (t,x)\in(0,T)\times\Omega
        \end{aligned}
    \end{equation*}
    From $\frac{\delta \mathcal{L}}{\delta \rho_V}=0$,
    \begin{equation*}
        \begin{aligned}
            \frac{\delta \mathcal{G}_V}{\delta \rho_V}(\rho_V) + \frac{\delta \mathcal{Q}}{\delta \rho_V}((\rho_i,\phi_i)_{i\in\mathbb{S}})
             + \partial_t \phi_V &= 0 && (t,x)\in(0,T')\times\Omega\\
            \frac{\delta \mathcal{G}_V}{\delta \rho_V}(\rho_V) + \frac{\delta \mathcal{Q}}{\delta \rho_V}((\rho_i,\phi_i)_{i\in\mathbb{S}}) - \frac{\alpha_V |m_V|^2}{2\rho_V^2} + \partial_t \phi_V &= 0 && (t,x)\in(T',T)\times\Omega.
        \end{aligned}
    \end{equation*}
    From $\frac{\delta \mathcal{L}}{\delta \rho_i(T,\cdot)}=0$ ($i\in \mathbb{S}$),
    \begin{equation*}
        \begin{aligned}
            \frac{\delta \mathcal{E}}{\delta \rho_i(T,\cdot)}(\rho_i(T,\cdot)) = \phi_i(T,\cdot).
        \end{aligned}
    \end{equation*}
    From $\frac{\delta \mathcal{L}}{\delta f}=0$,
    \begin{equation*}
        \begin{aligned}
            \frac{\delta \mathcal{G}_0}{\delta f}(f) + \phi_V = 0, && (t,x) \in (0,T')\times \Omega.
        \end{aligned}
    \end{equation*}
    From $\frac{\delta \mathcal{L}}{\delta m_i}=0$ ($i\in\mathbb{S}$),
    \begin{equation*}
        \begin{aligned}
            \frac{\alpha_i m_i}{\rho_i} &= - \nabla \phi_i && (t,x) \in (0,T)\times\Omega,\quad i\in\{S,I,R\}\\
            \frac{\alpha_V m_V}{\rho_V} &= - \nabla \phi_V && (t,x) \in (0,T')\times\Omega.
        \end{aligned}
    \end{equation*}
    By replacing $\frac{\alpha_i m_i}{\rho_i} = - \nabla \rho_i$ in $\frac{\delta \mathcal{L}}{\delta\rho_i}=0$ and $\frac{\delta \mathcal{L}}{\delta\phi_i}=0$, we derive the result.
\end{proof}


\begin{proof}[Proof of Lemma~\ref{lemma:M-k-bounded}]
    Let $q=(u,p)$. By the definition of $M^{(k)}$, we have
    \[
        \langle q, M^{(k)} q \rangle = \frac{1}{\tau^{(k)}} \|u\|^2_{L^2} + \frac{1}{\sigma^{(k)}} \|p\|_{\mathcal{H}^{(k)}}^2 - 2 \langle u, A_{u^{(k)}}^T p \rangle_{L^2}.
    \]
    Using Young's inequality and Lemma~\ref{lemma:nabla-A-bounded},
    \begin{equation*}
        \begin{split}
            &\leq \left(\frac{1}{\tau^{(k)}} + 1 \right) \|u\|_{L^2}^2 + 
        \left(\frac{1}{\sigma^{(k)}} + 1 \right) \|p\|_{\mathcal{H}^{(k)}}^2\\
            &\leq \left(\frac{1}{\tau^{(k)}} + 1 \right) \|u\|_{L^2}^2 + 
        C^2 \left(\frac{1}{\sigma^{(k)}} + 1 \right) \|p\|_{L^2}^2
            \leq \Theta^2 \|q\|_{L^2}^2.
        \end{split}
    \end{equation*}
    We are left to show the lower bound. Let $\epsilon>0$ be such that $\tau^{(k)} \sigma^{(k)} = (1-\epsilon)^2$. Then using H\"older's inequality,
    \begin{equation*}
        \begin{split}
            \langle q, M^{(k)} q \rangle
            &\geq \frac{1}{\tau^{(k)}} \|u\|^2_{L^2} + \frac{1}{\sigma^{(k)}} \|p\|_{\mathcal{H}^{(k)}}^2 - 2 \|u\|_{L^2} \|p\|_{\mathcal{H}^{(k)}}\\
            &= \frac{1}{\tau^{(k)}} \|u\|^2_{L^2} + \frac{1}{\sigma^{(k)}} \|p\|_{\mathcal{H}^{(k)}}^2 - \frac{2(1-\epsilon)}{\sqrt{\tau^{(k)}\sigma^{(k)}}} \|u\|_{L^2} \|p\|_{\mathcal{H}^{(k)}}.
        \end{split}
    \end{equation*}
    Again, using Young's inequality and Lemma~\ref{lemma:nabla-A-bounded},
    \begin{equation*}
        \begin{split}
            &\geq \frac{\epsilon}{\tau^{(k)}} \|u\|^2_{L^2} + \frac{\epsilon}{\sigma^{(k)}} \|p\|_{\mathcal{H}^{(k)}}^2
            \geq \frac{\epsilon}{\tau^{(k)}} \|u\|^2_{L^2} +  \frac{c^2 \epsilon}{\sigma^{(k)}} \|p\|_{L^2}^2 \geq \theta^2 \|q\|_{L^2}^2.
        \end{split}
    \end{equation*}
    This proves the claim.
\end{proof}

\noindent\textbf{Data Availability Statement:}
All data generated or analysed during this study are included in this published article. 

\noindent\textbf{Conflict of Interest statement}: There is no conflict of interest.

\bibliographystyle{plain}
\bibliography{vac_mfc}
\end{document}